\documentclass[reqno, 12pt]{amsart}
\usepackage{enumerate}
\usepackage{amssymb,url,color}
\usepackage{hyperref}
\usepackage{amsmath,amsthm}

\allowdisplaybreaks[4]

\newtheorem{thm}{Theorem}[section]

\newtheorem{lem}{Lemma}[section]
\newtheorem{rem}{Remark}[section]

\theoremstyle{definition}

\numberwithin{equation}{section}
\newcommand{\pp}{\mathbb{P}}
\newcommand{\nn}{\mathbb {N}}
\newcommand{\ee}{\mathbb{E}}

\newcommand{\FF}{\mathcal{F}}

\newcommand{\rr}{\mathbb{R}}

\def\beq{\begin{equation}}
\def\deq{\end{equation}}
\def\dsp{\displaystyle}
\allowdisplaybreaks 

\bfseries\rmfamily 

\begin{document}
\title[Moderate deviation principle for stochastic approximation ]
{Moderate Deviation Principle for a Stochastic Approximation Process}
\thanks{This work is supported by National Natural Science Foundation of China (NSFC-11971154) and Natural Science Foundation of Henan (No. 262300421307).}

\author[J. N. Shi]{Jianan Shi}
\address[J. N. Shi]{School of Mathematics and Statistics, Henan Normal University, Henan Province, 453007, China.}
\email{\href{mailto: J. N. Shi
<jiananshi2022@126.com>}{jiananshi2022@126.com}}

\author[Q. Yin]{Qing Yin}
\address[Q. Yin] {School of Mathematics and Statistics, Henan Normal University, Henan Province, 453007, China.}
\email{\href{mailto: Q. Yin
<qingyin1282@163.com>}{qingyin1282@163.com}}

\author[Y. Miao]{Yu Miao}
\address[Y. Miao]{School of Mathematics and Statistics, Henan Normal University, Henan Province, 453007, China.} \email{\href{mailto: Y. Miao
<yumiao728@gmail.com>}{yumiao728@gmail.com}; \href{mailto: Y. Miao <yumiao728@126.com>}{yumiao728@126.com}}

\begin{abstract}
In this paper, we investigate a stochastic approximation procedure $\left(X_n\right)_{n\ge 0}$ taking values in $\rr$. The process is adapted to a filtration $(\FF_n)_{n\ge 0}$ and satisfies the recursion $X_{n+1}=X_n+\frac{b}{n+1}\big[g(X_n)+U_{n+1}\big]$, where $b>0, g:\rr\to\rr$ is a function and $\left(U_n\right)_{n\ge 1}$ is a sequence of bounded martingale differences adapted to the filtration $(\FF_n)_{n\ge 1}$. We establish the moderate deviation principle for the stochastic process $(X_n)_{n\ge 0}$.  As auxiliary results, we also obtain the exponential inequality for $(X_n)_{n\ge 0}$ and the moderate deviation principle for weighted sums of  bounded martingale differences.
\end{abstract}

\keywords{Stochastic approximation; moderate deviation principle; bounded martingale difference.}
\subjclass[2020]{62L20, 60F10}

\maketitle
\section{Introduction}
We consider the stochastic approximation method originally introduced by Robbins and Monro \cite{R-M}.
Let $M(x)$ be a fixed function such that the equation $M(x)=\alpha$ has a unique solution $x=\theta$, where $\alpha$ is a given constant. At each $x$, the function $M(x)$ can be measured with random error $U(x)$,
$$
Y(x)=M(x)+U(x),\ \ \ \ \ee U(x)=0.
$$
In other words, $M(x)$ represents a regression function,
$$
M(x)=\int_{-\infty}^\infty y dF_x(y),
$$
where $F_x(y)$ is an unknown family of conditional distribution functions depend on the real parameter $x$.
The Robbins-Monro procedure for finding the root $\theta$ is defined as follows. Let $(a_n)_{ n\ge 0}$ be a sequence of positive numbers satisfying
$$
\sum_{n=0}^\infty a_n^2<\infty,\ \ \ \sum_{n=0}^\infty a_n=\infty.
$$
Given an initial guess $X_0$, define the sequence $(X_n)_{n\ge 0}$ by the recursion
\beq\label{R-M-2}
X_{n+1}=X_n+a_n(\alpha- Y_n),
\deq
where
$$
Y_n=Y(X_n)=M(X_n)+U_n,\ \ \ \ U_n=U(X_n)
$$
represents the measurement at the $n$-th iteration. Robbins and Monro \cite{R-M} proved that $X_n$ convergence to $\theta$ in probability. For further details, see \cite{B}, \cite{G-K}, \cite{G}, \cite{M-R}, \cite{R-D}, \cite{V} and \cite{W-72}.

In this article, we focus on a generalized stochastic approximation algorithm that builds upon the Robbins-Monro procedure. Let $(X_n)_{n\ge 0}$ be a stochastic process taking values in $\rr$, adapted to the filtration $(\FF_n)_{n\ge 0}$, and  satisfying the recursion:
\beq\label{sa-0}
X_{n+1}-X_n=\frac{b}{n+1}\big[g(X_n)+U_{n+1}\big],\  \ X_0=x_0\in\rr,
\deq
where $b>0, g:\rr\to\rr$ is a  function, and $\left(U_n\right)_{n\ge 1}$ is a sequence of bounded martingale differences adapted to the filtration $\left(\FF_n\right)_{n\ge 1}$. It is well known that the characteristics of the error term $U_{n}$ significantly influence the convergence behavior and rate of the algorithm (\ref{sa-0}). For identically distributed errors $(U_n)_{n\ge 1}$, Sacks \cite{S} showed that $\sqrt{n} \left(X_n-\theta\right)$ converges in distribution to a normal distribution. When $(U_n)_{n\ge 1}$ is a sequence of independent and identically distributed (i.i.d.) random variables, Dippon \cite{D-04} obtained (weak) higher order representations which involve sums of
linear, quadratic and cubic forms of the observation errors; Dippon \cite{D} also proved a limit theorem of Berry-Esseen type; Lai and
Robbins \cite{L-R} derived the law of the iterated logarithm for the recursion (\ref{sa-0}).  Moreover, when $(U_n)_{n\ge 1}$ is a sequence of uniformly bounded and i.i.d. random variables, Miao and Dong \cite{M-D} were the first to established the moderate deviation principle for the stochastic approximation algorithm (\ref{sa-0}).

In the case where $(U_n)_{n\ge 1}$ is a sequence of bounded random variables, for the recursion (\ref{sa-0}), Renlund \cite{R-1} proved that $X_n$ converges almost surely to a stable point $x^*$ of $g$,  where the stable point is defined in Section 2 of this paper.  In a subsequent work, Renlund \cite{R-2} established that $\sqrt{n} (X_n-x^*)$ is asymptotically normally distributed. Regarding large-scale fluctuations,  Shi et al. \cite{S-M} provided upper bounds for the large deviation
probabilities $\pp (|X_n-x^*|>\varepsilon)$ for $\varepsilon>0$ and sufficiently large $n$. Furthermore, Shi et al. \cite{S-M-1} obtained the law of the iterated logarithm for $X_n-x^*$.

Building on these results, we now turn to the moderate deviation principle for the stochastic approximation algorithm (\ref{sa-0}), where $(U_n)_{n\ge 1}$ is a sequence of bounded martingale differences.
Both moderate deviation and large deviation principles are essential in statistical theory.   While large deviation principles  provide precise asymptotic estimations corresponding to the law of large numbers, moderate deviation principles  refine the fluctuations associated with  the central limit theorem and the law of the iterated logarithm.
To illustrate this, let $(Y_n)_{n\ge 1}$ be a sequence of i.i.d. random variables with mean $\mu$ and  variance $\sigma^2_1$. The strong law of large numbers guarantees that the sample mean $\bar{Y}_n:=\frac{\sum_{k=1}^n Y_k}{n}$  converges almost surely to $\mu $.  Additionally, the central limit theorem characterizes its distributional fluctuations as $\frac{\bar{Y}_n-\mu}{\sigma_1 /\sqrt{n}}\xrightarrow{d} \nn(0,1)$. We are interested in the asymptotic behavior of the probability:
\beq\label{mdp-1}
\pp\left(\bar{Y}_n-\mu\in A_n\right),
\deq
where $A_n$ is a Borel subset of $\rr$  that represents the deviation of $\bar{Y}_n$ from $\mu$. The choice of $A_n$ determines the probabilistic regime.
Specifically, the case where $A_n=A$ for all $n$ corresponds to the theory of large deviations,  while $A_n=\frac{1}{\sqrt{n}}A$ is governed by the central limit theorem.
When $A_n=(\frac{b_n}{\sqrt{n}})A$ for a sequence of positive numbers $(b_n)_{n\ge 1}$ that satisfies $b_n\rightarrow\infty$ and  $\frac{b_n}{\sqrt{n}}\rightarrow0$, (\ref{mdp-1}) falls within the scope of  moderate deviations.  Consequently, moderate deviation provides an intermediate estimation between the central limit theorem and the large deviation.  For a more thorough discussion of large deviation theory and its applications, refer to Dembo
and Zeitouni \cite{D-Z}.

In this paper, we establish the moderate deviation principle for the stochastic approximation process (\ref{sa-0}). A key step in the proof is deriving the exponential inequality for the stochastic approximation process (\ref{sa-0}) and the moderate deviation principle for weighted sums of  bounded martingale differences.
These two results are crucial for proving the main theorem, and for clarity, we present them as separate theorems.
Our main results are presented in Section 2. The detailed proofs are provided in Section 3.
Throughout the paper, $C$ denotes a constant that may vary depending on the context.

\section{Main results}
Let $(X_n)_{n\ge 0}$ be a stochastic process taking values in $\rr$ and satisfying the stochastic approximation algorithm  (\ref{sa-0}). Denote by $g^{'}(x)$ the first derivative of the function $g$ with respect to $x$, and $g^{''}(x)$  the second derivative of the function $g$ at point $x$.  We assume the following conditions:
\begin{enumerate}[$(1)$]
\item  For any $n\ge 0$,
\beq\label{sa-2}
\ee \left[U_{n+1}|\FF_{n}\right]=0\ \text{and} \ |U_{n}|\le K_u.
\deq

\item  For any $n\ge 0$,
\beq\label{sa-6}
\ee\left[\left( U_{n+1}\right)^2\Big|\FF_{n}\right] = \sigma^2 \ \ a.s.
\deq

\item  For any $x\in \rr$,
\beq\label{sa-1}
\left|g^{''}(x)\right|\le K_a.
\deq

\item  For any $x\in \rr$, the function $g$ has a unique stable point $x^*$ (i.e., $g(x^*) = 0$ and $g'(x^*) < 0$) such that
\beq\label{g}
K_1\left|x-x^*\right|\le \left|g\left(x\right)\right|\le K_2\left|x-x^*\right|.
\deq

    \end{enumerate}
Here, $K_1, K_2, K_u, K_a, \sigma$ are positive constants.

\begin{rem}
Let $x^{*}$ be the unique stable point of the function $g$. Then, there exists a sufficiently small $\varepsilon>0$ such that
\beq\label{l1}
g(x)>0,\ \text{for}\ x\in(x^{*}-\varepsilon, x^{*})\ \ \ \text{and}\ \ \ g(x)<0, \ \text{for}\  x\in(x^{*}, x^{*}+\varepsilon),
\deq
which implies that
$$
(x-x^{*})g(x)\le0,\ \ x\in(x^{*}-\varepsilon, x^{*}+\varepsilon).
$$
\end{rem}

We now state an exponential inequality for the process $(X_n)_{n\ge 0}$, which plays a key role in proving the moderate deviation principle for this process.

\begin{thm}\label{lem-5}{\bf (Exponential inequality)}
Let $(X_n)_{n\ge 0}$ be a stochastic process taking values in $\rr$ and satisfying the stochastic approximation algorithm  (\ref{sa-0}). Assume that the conditions (\ref{sa-2}) and (\ref{g}) hold.
Then, for any $\varepsilon>0$, there exist a constant $\delta\in (0,1)$ such that, for all sufficiently large $n$, we have
$$
\pp\left(\left|X_{n+1}-x^*\right|\ge \varepsilon\right)\le 2 \exp\left(- \frac{C \varepsilon^2\delta n}{1-\delta}\right)\left(1+\frac{1}{1-\exp\left(- \frac{C \varepsilon^2  }{1-\delta}\right)}\right).
$$
Here $\delta$ can be chosen to satisfy
$$
0<\delta<\exp\left(-\frac{2(F+\varepsilon)}{b K_1 \varepsilon}\right),
$$
where $F=F\left(b, K_1, K_u\right)$ is a positive constant.
\end{thm}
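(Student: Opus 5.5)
The plan is to split at a random "last entry time" into a small neighbourhood of $x^*$. Before that time the drift pushes the process back towards $x^*$. After it, the process must travel a distance of order $\varepsilon$ using only martingale noise, which an Azuma--Hoeffding bound controls. The form of the bound, a main term $2\exp(-C\varepsilon^2\delta n/(1-\delta))$ times a factor $1+\frac{1}{1-\exp(-C\varepsilon^2/(1-\delta))}$, suggests two ingredients. One is a sum over possible starting indices $m\in[\delta n, n]$, which gives the geometric series. The other is an event that involves only indices of order $\delta n$.

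\textbf{Step 1 (no long stay far from $x^*$).} By (\ref{g}) and the Remark, $g$ has constant sign on each side of $x^*$, with $|g(x)|\ge K_1|x-x^*|$. Suppose $|X_k-x^*|\ge \varepsilon/2$ for all $k\in[\delta n,n]$, with the process on one side, say above $x^*$. Then
$$X_{n+1}-X_{\lfloor\delta n\rfloor}\le -\frac{bK_1\varepsilon}{2}\sum_{k=\delta n}^{n}\frac1{k+1}+b\sum_{k=\delta n}^n\frac{U_{k+1}}{k+1}\approx -\frac{bK_1\varepsilon}{2}\log\frac1\delta+M_n .$$
Bounded increments (roughly $b(K_2|X_k-x^*|+K_u)/(k+1)$, and $|X_k|$ grows at most logarithmically) give a deterministic bound $F+\varepsilon$ on the initial distance $|X_{\lfloor\delta n\rfloor}-x^*|$ over this window, up to an exponentially small event. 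The condition $\delta<\exp(-2(F+\varepsilon)/(bK_1\varepsilon))$ is exactly what makes $\frac{bK_1\varepsilon}{2}\log(1/\delta)>F+\varepsilon$. So staying far away forces $|M_n|\ge c\varepsilon$. Azuma's inequality with $\sum_{k\ge\delta n}(k+1)^{-2}\le 1/(\delta n)$ then bounds this probability by $2\exp(-C\varepsilon^2\delta n)$, which is at most the first term.

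\textbf{Step 2 (excursion after the last visit).} Otherwise let $\tau=\max\{k\in[\delta n,n]:|X_k-x^*|<\varepsilon/2\}$. On $\{\tau=m\}$, the process stays on one side with $|X_k-x^*|\ge\varepsilon/2$ for $m<k\le n+1$, so the drift terms have the favourable sign. Reaching $|X_{n+1}-x^*|\ge\varepsilon$ therefore requires $|b\sum_{k=m}^{n}U_{k+1}/(k+1)|\ge \varepsilon/2-O(1/m)$. Azuma gives a bound $2\exp(-C\varepsilon^2 m)$ per $m$, using $\sum_{k\ge m}(k+1)^{-2}\le 1/m$. Summing over $m\ge\delta n$ produces $2\exp(-C\varepsilon^2\delta n)/(1-e^{-C\varepsilon^2})$. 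Absorbing constants, with $1/(1-\delta)\ge1$, yields the stated form.

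\textbf{Main obstacle.} The delicate point is Step 1. It needs a uniform a priori bound $F$ on $|X_{\delta n}-x^*|$ that depends only on $b,K_1,K_u$. My first attempt would be to argue that whenever $|X_k-x^*|$ is large, the drift is contracting. Since $|g(x)|\le K_2|x-x^*|$ and $b/(k+1)$ is small, one step cannot overshoot for large $k$. Hence $|X_k-x^*|\le \max(|X_{k-1}-x^*|,c)+bK_u/(k+1)$, up to a martingale part handled by Azuma. A second subtlety is that the process might cross $x^*$ inside the window without ever entering the $\varepsilon/2$-neighbourhood. For large $k$ this is impossible, because the step size $b(K_2|X_k-x^*|+K_u)/(k+1)$ is eventually smaller than $\varepsilon$. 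That is why the statement requires $n$ sufficiently large.
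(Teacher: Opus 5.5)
Your skeleton matches the paper's. Its Lemma \ref{lem-4} bounds $\pp(|X_{n+1}-x^*|\ge\varepsilon)$ by three terms:
\begin{itemize}
\item the probability $\pp(|X_{[\delta n]}-x^*|>F)$;
\item a ``stays far on $[\delta n,n]$'' term, which through exactly your condition $\frac{bK_1\varepsilon}{2}\log\frac1\delta>F+\varepsilon$ forces $b\bigl|\sum_{i\ge[\delta n]}U_{i+1}/(i+1)\bigr|\ge2\varepsilon$;
\item a sum over last-exit times.
\end{itemize}
Azuma--Hoeffding and a geometric series then finish.

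\textbf{The gap} is in the step you call the main obstacle: the constant $F$. Your recursion $|X_k-x^*|\le\max(|X_{k-1}-x^*|,c)+bK_u/(k+1)$ never lets the contraction pull the distance back down. It therefore only iterates to $\max(|X_{k_0}-x^*|,c)+bK_u\log(n/k_0)$, which is the logarithmic growth you mention. That is fatal, not cosmetic. With $F$ of order $bK_u\log n$, the constraint $\delta<\exp(-2(F+\varepsilon)/(bK_1\varepsilon))$ forces $\delta\lesssim n^{-2K_u/(K_1\varepsilon)}$. Hence $\delta n\to0$ whenever $\varepsilon<2K_u/K_1$, and the final bound says nothing. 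Routing the signed noise through Azuma inside the same max-recursion does not rescue it. If you start from a fixed time $k_0$, the exceptional event has probability of order $e^{-ck_0}$, which does not decay in $n$. If you start from a time growing with $n$, you again have no bound on the starting value.

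\textbf{The fix} is to use the contraction multiplicatively; boundedness of $U_n$ then makes the bound deterministic. For $k+1\ge bK_2$, condition (\ref{g}) and the sign of $g$ on each side of $x^*$ give $0\le 1+\frac{b}{k+1}\frac{g(X_k)}{X_k-x^*}\le 1-\frac{bK_1}{k+1}$. So, surely,
\[
|X_{k+1}-x^*|\le\Bigl(1-\frac{bK_1}{k+1}\Bigr)|X_k-x^*|+\frac{bK_u}{k+1}.
\]
Consequently $[0,K_u/K_1]$ is invariant, the excess above $K_u/K_1$ contracts, and $(|X_k-x^*|-K_u/K_1)^+\le C(x_0)k^{-bK_1}$. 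This is the content of the paper's Lemma \ref{lem-3}. For large $n$ you get $|X_{[\delta n]}-x^*|\le F:=K_u/K_1+1$ surely, the first term vanishes, and no probabilistic argument is needed at this stage.

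With this bound your no-crossing argument also works. The paper sidesteps it by treating $\{X_{n+1}-x^*\ge\varepsilon\}$ and $\{X_{n+1}-x^*\le-\varepsilon\}$ separately, using the one-sided exit time $\max\{k: X_k-x^*<\varepsilon/2\}$. That only needs $\{X_k-x^*\le\varepsilon/2\}\subset\{X_{k+1}-x^*\le 3\varepsilon/4\}$ for large $k$, with no a priori bound.

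A minor point: $1/(1-\delta)\ge1$ runs the wrong way for absorbing that factor. Either note that $\delta<e^{-2/(bK_1)}$, so replacing $C$ by $C(1-\delta)$ changes only the constant, or keep the sharper tail sum $\sum_{i=m+1}^{n}(i+1)^{-2}\le\frac{1}{m+1}-\frac{1}{n+1}$ as the paper does.
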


\begin{rem}
Even if $(U_n)_{n\ge 1}$ is not a sequence of martingale differences, Theorem \ref{lem-5} still holds. Specifically, our proof requires only that $(U_n)_{n\ge 1}$ be bounded and that condition  (\ref{g}) is satisfied.
\end{rem}

The next result concerns the moderate deviation principle for  weighted sums of bounded martingale differences, which is crucial for proving the moderate deviation principle of the stochastic process $(X_n)_{n\ge 0}$.

\begin{thm}\label{lem-6}{\bf (Moderate deviation principle for weighted sums of  bounded martingale differences)}
Let $\{U_n, \FF_n\}_{n\ge 1}$ be a sequence of \ $\rr$-valued, bounded martingale differences, satisfying the conditions (\ref{sa-2}) and (\ref{sa-6}). Let $(b_n)_{n\ge 1}$ be a sequence of positive numbers such that $b_n\rightarrow\infty$ and  $\frac{b_n}{\sqrt{n}}\rightarrow0$.
For any constants $b$ and $\alpha_1$ satisfying $b\alpha_1<-1$, and for any $r>0$, we have
\beq\label{lem-6-0}
\lim_{n\rightarrow\infty} \frac{1}{b^2_n} \log \pp \left(h_n\left|b\sum_{k=0}^n \frac{1}{k+1}\beta_{k+1}^n\left(b\alpha_1\right)U_{k+1}\right|>r b_n\right)=-\frac{r^2}{2 \sigma^2},
\deq
where $h_n:=\left(b^2\sum_{k=0}^n \left(k+1\right)^{-2} \left(\beta_{k+1}^n(b \alpha_1)\right)^2\right)^{-\frac{1}{2}}$ and $\beta_{k}^n\left(b \alpha_1\right):=\prod_{j=k}^n \left(1+\frac{b \alpha_1}{j+1}\right)$.
\end{thm}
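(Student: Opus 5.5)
We prove Theorem \ref{lem-6} with the G\"artner--Ellis theorem applied to the normalized weighted martingale
$$
S_n:=h_n\, b\sum_{k=0}^n \frac{1}{k+1}\beta_{k+1}^n(b\alpha_1)U_{k+1}=\sum_{k=0}^n a_{n,k}U_{k+1},
\qquad a_{n,k}:=\frac{h_n b\,\beta_{k+1}^n(b\alpha_1)}{k+1} .
$$
By construction $\sum_{k=0}^n a_{n,k}^2=1$. We will show that for every $\lambda\in\rr$,
$$
\lim_{n\to\infty}\frac{1}{b_n^2}\log \ee\exp\left(\lambda b_n S_n\right)=\frac{\lambda^2\sigma^2}{2}.
$$
The limit is finite, smooth and strictly convex, so G\"artner--Ellis gives an MDP for $S_n/b_n$ with speed $b_n^2$ and good rate function $I(x)=x^2/(2\sigma^2)$. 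The sets $\{|x|>r\}$ and $\{|x|\ge r\}$ have the same infimum $r^2/(2\sigma^2)$, because $I$ is continuous. Hence the upper and lower bounds coincide, which yields (\ref{lem-6-0}).

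The first step is a uniform smallness estimate for the weights:
$$
\max_{0\le k\le n}|a_{n,k}|\le C n^{-1/2}.
$$
For large $j$ we have $1+\frac{b\alpha_1}{j+1}>0$, and $\log\beta_{k+1}^n=b\alpha_1\sum_{j=k+1}^n \frac{1}{j+1}+O\bigl(\sum_j j^{-2}\bigr)$. This gives
$$
\beta_{k+1}^n(b\alpha_1)\asymp \left(\frac{k+1}{n+1}\right)^{-b\alpha_1}=\left(\frac{k+1}{n+1}\right)^{\gamma},\qquad \gamma:=-b\alpha_1>1,
$$
uniformly in $k$ beyond a fixed index. The finitely many small indices contribute at most $C n^{-\gamma}$. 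Consequently
$$
h_n^{-2}\asymp n^{-2\gamma}\sum_{k\le n}k^{2\gamma-2}\asymp n^{-1},
$$
since $2\gamma-2>0$. Each individual weight is bounded by $h_n b (k+1)^{\gamma-1}n^{-\gamma}\le C n^{1/2}\cdot n^{-1}=Cn^{-1/2}$. Together with $|U_k|\le K_u$, this means every summand of $\lambda b_n S_n$ is bounded by $C|\lambda| b_n/\sqrt n\to0$ uniformly in $k$. This is exactly where the condition $b\alpha_1<-1$ is used, and it is the main technical obstacle: all the constants in the product asymptotics must be controlled uniformly in $k$.

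The second step computes the log-Laplace transform by conditioning backwards, using (\ref{sa-2}) and (\ref{sa-6}). For $|x|\le K_u$, $\ee[x|\FF_k]=0$, $\ee[x^2|\FF_k]=\sigma^2$ and small $t$, Taylor expansion gives
$$
\ee\left[e^{tU_{k+1}}\big|\FF_k\right]=1+\frac{t^2\sigma^2}{2}+R,\qquad |R|\le C|t|^3 e^{|t|K_u}.
$$
We apply this with $t=\lambda b_n a_{n,k}$, peeling off one factor at a time from $k=n$ down to $k=0$. The conditional variance is deterministic, so these bounds hold almost surely with nonrandom constants, and we get
$$
\exp\left(\sum_k\left[\frac{t_k^2\sigma^2}{2}-C|t_k|^3\right]\right)\le \ee e^{\lambda b_n S_n}\le \exp\left(\sum_k\left[\frac{t_k^2\sigma^2}{2}+C|t_k|^3\right]\right),
$$
after using $\log(1+u)=u+O(u^2)$. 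Here
$$
\sum_k t_k^2=\lambda^2b_n^2\sum_k a_{n,k}^2=\lambda^2 b_n^2,
\qquad
\sum_k|t_k|^3\le |\lambda|^3 b_n^3\max_k|a_{n,k}|\sum_k a_{n,k}^2\le C|\lambda|^3 b_n^3 n^{-1/2}=o(b_n^2).
$$
The quadratic error terms are also $o(b_n^2)$ by the same bound. Dividing by $b_n^2$ gives the limit $\lambda^2\sigma^2/2$, and the G\"artner--Ellis argument above finishes the proof.
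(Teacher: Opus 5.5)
Your proof is correct, and it computes the Cram\'er functional by a different and more direct route than the paper.

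Both arguments reduce (\ref{lem-6-0}) via the G\"artner--Ellis theorem to $\frac{1}{b_n^2}\log\ee e^{\lambda b_n S_n}\to\lambda^2\sigma^2/2$. Both also rest on the same two facts: the normalized weights are uniformly $O(n^{-1/2})$, which is where $b\alpha_1<-1$ enters, and the conditional variance in (\ref{sa-6}) is the constant $\sigma^2$.

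The paper uses a Lindeberg-type replacement. It introduces independent $Y_k\sim N(0,\sigma^2)$ and telescopes the difference of the two Laplace transforms into $\sum_m Q_m$, with $Q_m=\ee[e^{\lambda b_nW_m}]\,\ee[e^{\lambda b_nZ_m}R_m]$. It bounds $R_m$ by the cubic Taylor term, and then proves the reverse inequality separately through a product lower bound over $k\ge[\delta n]$.

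You instead peel off the conditional Laplace transforms from $k=n$ down to $k=0$. Each factor is pinned between $\exp(t_k^2\sigma^2/2\pm C|t_k|^3)$ by nonrandom constants. This gives matching upper and lower bounds at once, with error $O(|\lambda|^3b_n^3n^{-1/2})=o(b_n^2)$, and makes the Gaussian comparison unnecessary.

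This buys more than brevity. To close the Lindeberg scheme one needs $\ee[e^{\lambda b_nW_m}]\,\ee[e^{\lambda b_nZ_m}]\le e^{\lambda^2b_n^2\sigma^2/2+o(b_n^2)}$ uniformly in $m$, and the sharp way to bound $\ee e^{\lambda b_nZ_m}$ is exactly your peeling estimate. The paper's uniform bound (\ref{a-14}) is too crude for this. The weights $(j+1)^{-2}(\beta_{j+1}^n(b\alpha_1))^2$ increase in $j$, so for $m=[n/2]$ the bracket in (\ref{a-15}) tends to $\frac32-2^{1+2b\alpha_1}>1$ rather than staying at most $1$.

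Similarly, the paper's lower-bound chain replaces every weight with $k\ge[\delta n]$ by a constant multiple of the smallest one. That only delivers an exponent $(1-\delta)C_0\lambda^2b_n^2\sigma^2/2$ with $C_0<1$, not $\lambda^2b_n^2\sigma^2/2$. Your two-sided estimate avoids both problems.
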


\begin{rem}
Eichelsbacher and  L\"{o}we \cite{E-M} established the moderate deviation principle for bounded martingale differences. Our result, which extends this to weighted bounded martingale differences, further generalizes their conclusion.
\end{rem}

The main result of this paper establishes the moderate deviation principle for the stochastic process $(X_n)_{n\ge 0}$.

\begin{thm}\label{thm-sa1}{\bf(Moderate deviation principle for stochastic approximation $(X_n)_{n\ge 0}$)}
Let $(X_n)_{n\ge 0}$ be a stochastic process taking values in $\rr$ and satisfying the stochastic approximation algorithm  (\ref{sa-0}). Assume that the conditions (\ref{sa-2})-(\ref{g}) hold, and that $bg^{'}(x^*)<-1$.
Let $(b_n)_{n\ge 1}$ be a sequence of positive numbers such that $b_n\rightarrow\infty$ and  $b_n=O(n^{\frac{1}{2(1+\gamma)}})$ for a constant $\gamma>0$.
Then, for any $r>0$, we have
$$
\lim_{n\rightarrow \infty} \frac{1}{b^2_n} \log \pp \left( h_n\left|X_{n+1}-x^*\right|>r b_n\right)=-\frac{r^2}{2 \sigma^2},
$$
where
$$
h_n:=\left(b^2\sum_{k=0}^n (k+1)^{-2} \left(\beta_{k+1}^n\left(b g^{'}\left(x^*\right)\right)\right)^2\right)^{-\frac{1}{2}}
$$
and
$$
\beta_{k}^n\left(b g^{'}\left(x^*\right)\right):=\prod_{j=k}^n \left(1+\frac{b g^{'}\left(x^*\right)}{j+1}\right).
$$
\end{thm}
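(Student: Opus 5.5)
We would prove Theorem \ref{thm-sa1} by linearizing the recursion around $x^*$ and reducing the problem to Theorem \ref{lem-6}, with Theorem \ref{lem-5} used to control the nonlinear remainder. Put $Y_n:=X_n-x^*$ and $\alpha_1:=g'(x^*)$. By Taylor's formula and (\ref{sa-1}),
$$
g(X_n)=\alpha_1 Y_n+R_n,\qquad |R_n|\le \tfrac{K_a}{2}Y_n^2 .
$$
Substituting this into (\ref{sa-0}) gives $Y_{n+1}=\bigl(1+\frac{b\alpha_1}{n+1}\bigr)Y_n+\frac{b}{n+1}(R_n+U_{n+1})$. Iterating, we obtain the decomposition
$$
Y_{n+1}=\beta_0^n(b\alpha_1)Y_0+b\sum_{k=0}^n\frac{\beta_{k+1}^n(b\alpha_1)}{k+1}U_{k+1}+b\sum_{k=0}^n\frac{\beta_{k+1}^n(b\alpha_1)}{k+1}R_k=:A_n+M_n+D_n .
$$
The term $M_n$ is exactly the weighted martingale sum in Theorem \ref{lem-6}. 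Hence $\frac{1}{b_n^2}\log\pp(h_n|M_n|>rb_n)\to-\frac{r^2}{2\sigma^2}$.

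Next we record the orders of magnitude. Since $b\alpha_1<-1$, we have $\beta_{k+1}^n\asymp (k/n)^{-b\alpha_1}$, and therefore $h_n^{-2}\asymp n^{-1}$, i.e.\ $h_n\asymp\sqrt n$. The deterministic term satisfies $h_n|A_n|\le C\sqrt n\, n^{b\alpha_1}=o(b_n)$. The conclusion then follows from the standard exponential-equivalence sandwich. For any small $\eta>0$,
$$
\pp(h_n|Y_{n+1}|>rb_n)\le \pp(h_n|M_n|>(r-2\eta)b_n)+\pp(h_n|D_n|>\eta b_n)+\pp(h_n|A_n|>\eta b_n),
$$
and the reverse inequality holds with $r+2\eta$. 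Letting $\eta\to0$ gives the theorem, provided we show
$$
\limsup_n \frac{1}{b_n^2}\log\pp\bigl(h_n|D_n|>\eta b_n\bigr)=-\infty .
$$

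This remainder estimate is the main obstacle. We bound $h_n|D_n|\le C n^{-1/2}\sum_{k\le n}(k/n)^{-b\alpha_1}Y_k^2$. To control $\sum_k (k/n)^{-b\alpha_1} Y_k^2$ we split according to $k$.
\begin{itemize}
\item For $k\le n^{\rho}$ with small $\rho$: $Y_k$ is bounded deterministically, since $|Y_{k+1}|\le |Y_k|(1+\frac{bK_2}{k+1})+\frac{bK_u}{k+1}$ gives at most polynomial growth. The weight $(k/n)^{-b\alpha_1}$ with $-b\alpha_1>1$ then makes this part $o(\sqrt n\, b_n)$ deterministically.
\item For $k>n^{\rho}$: we need $\pp(|Y_k|>\varepsilon_k)$ to be super-exponentially small at speed $b_n^2$, with $\varepsilon_k=n^{-1/4}b_n^{1/2}\cdot$(small). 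Theorem \ref{lem-5} only gives fixed-$\varepsilon$ bounds. We would therefore rerun its argument, or better, first apply the Azuma--Hoeffding inequality to the linearized representation on the event $\{\sup_{m\ge k}|Y_m|\le\varepsilon\}$. The latter event has probability $1-e^{-cn^{\rho}}$ by Theorem \ref{lem-5}, and $n^{\rho}\gg b_n^2$ because $b_n^2=O(n^{1/(1+\gamma)})$ once $\rho$ is taken close to $1$; this is where the growth condition on $b_n$ enters.
\end{itemize}
On this good event, $M$-type sums started at time $k$ satisfy Hoeffding bounds, and these yield $|Y_m|\le m^{-1/2}b_n n^{\gamma'}$ outside probability $e^{-cb_n^2 n^{\gamma''}}$. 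Consequently $n^{-1/2}\sum_k (k/n)^{-b\alpha_1}Y_k^2\le C n^{-1/2}b_n^2 n^{2\gamma'}\log n=o(b_n)$, using $b_n=O(n^{1/(2(1+\gamma))})$ with $\gamma'$ small relative to $\gamma$. This gives the required superexponential negligibility of $D_n$ and completes the plan.
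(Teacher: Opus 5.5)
Your skeleton matches the paper's: the same Taylor decomposition $I_{1,n}+I_{2,n}+I_{3,n}$ (your $A_n+D_n+M_n$), Theorem~\ref{lem-6} for the martingale term, a deterministic bound for $I_{1,n}$, and an exponential-equivalence sandwich. Only the remainder is handled differently. The paper bounds $I_{2,n}$ by a union bound with weights $p_{kn}$ and then applies Theorem~\ref{lem-5} at the shrinking thresholds $\varepsilon_k^2\asymp b_n^{2+\gamma}/k$. But the rate in Theorem~\ref{lem-5} is of order $\varepsilon^2\delta$ with $\delta<\exp(-2(F+\varepsilon)/(bK_1\varepsilon))$. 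So your remark that it is a fixed-$\varepsilon$ statement is justified, and localization is the sounder route.

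As written, however, your remainder estimate does not close, because your two bullets require incompatible $\rho$:
\begin{itemize}
\item Your weight is off by a factor $n/k$. Since $h_n\asymp\sqrt n$ and $|\beta_{k+1}^n(b\alpha_1)|\le C((k+1)/n)^{-b\alpha_1}$, the correct bound is $h_n|D_n|\le C\sqrt n\sum_{k}(k+1)^{-1}((k+1)/n)^{-b\alpha_1}Y_k^2$.
\item Your burn-in bullet uses only $|Y_k|\lesssim k^{bK_2}$, and $bK_2\ge-b\alpha_1$ because $|g'(x^*)|\le K_2$. With this bound, the part $k\le n^\rho$ is $o(b_n)$ only for $\rho<1/3$ (only $\rho<1/2$ even with your weight).
\item Your good-event bullet needs $n^\rho\gg b_n^2$, i.e.\ $\rho>1/(1+\gamma)$ when $b_n\asymp n^{1/(2(1+\gamma))}$. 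So for $\gamma\le 2$ no $\rho$ works.
\item With $\rho$ close to $1$, sums started at $K=n^\rho$ cannot give $|Y_m|\le m^{-1/2}b_nn^{\gamma'}$ for $m$ just above $K$. There the good event controls $\beta_K^{m-1}(b\alpha_1)Y_K$ only by $\varepsilon$, which is much larger than $K^{-1/2}b_nn^{\gamma'}$.
\end{itemize}

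The missing ingredient is the restoring drift. The sign condition gives $|Y_k|\le F$ deterministically (Lemma~\ref{lem-3}), not just polynomial growth, so the terms $k<K$ contribute at most $Cn^{1/2+b\alpha_1}K^{-b\alpha_1}$. Tie the cutoff to $b_n$: take $K=b_n^2n^{\theta}$ with $\theta>0$ small. Then this contribution is $b_n(b_n^2/n)^{-b\alpha_1-1/2}n^{-b\alpha_1\theta}=o(b_n)$. Meanwhile $\pp(\max_{K\le m\le n}|Y_m|>\varepsilon)\le C_\varepsilon e^{-c_\varepsilon K}$ by Theorem~\ref{lem-5} at fixed $\varepsilon$, which is superexponentially small at speed $b_n^2$.

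On the good event you still need to spell out a bootstrap, because the linearized representation contains $R_j$ with $|R_j|\le\frac{K_a\varepsilon}{2}|Y_j|$. Concretely:
\begin{itemize}
\item Apply Lemma~\ref{lem-2} to the deterministic-weight sums $\sum_{j=K}^{m-1}\frac{b}{j+1}\beta_{j+1}^{m-1}(b\alpha_1)U_{j+1}$, whose Hoeffding proxy is $\asymp 1/m$.
\item Bound the initial term by $\sqrt m\,(K/m)^{-b\alpha_1}\varepsilon\le\sqrt K\,\varepsilon$.
\item Induct on $\max_{K\le j\le m}\sqrt j\,|Y_j|$. The induction closes for small $\varepsilon$ because $\sum_{j\le m}j^{-1}(j/m)^{-b\alpha_1-1/2}$ is bounded.
\end{itemize}
This gives $|Y_m|\le Cm^{-1/2}b_nn^{\theta/2}$ for $K\le m\le n$, outside probability $2ne^{-cb_n^2n^{\theta}}$. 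Your final estimate $n^{-1/2}b_n^2n^{\theta}=o(b_n)$ then holds for $\theta<\gamma/(2(1+\gamma))$. With these repairs your route works, and it avoids the uniformity problem in the paper's use of Theorem~\ref{lem-5}.
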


\begin{rem} Under the conditions (\ref{sa-2}), (\ref{g}) and the additional assumption
\beq\label{sa-4}
\left|\ee\left[\frac{b}{n+1}U_{n+1}\  \Big|\ \FF_n\right]\right|\le \frac{K_e}{n^2},
\deq
where $K_e$ is a positive constant,
Renlund \cite{R-1} proved that $X_n$ converges almost surely to a stable point of $g$ when $X_n$ takes values in $[0,1]$.
\end{rem}

\begin{rem}
When  the sequence $(U_n)_{n\ge 1}$  is relaxed to bounded i.i.d. random variables with $\ee[U_n]=0$ and $\ee[U_n^2]=\sigma^2$,  Miao and Dong \cite{M-D} established the moderate deviation principle for the stochastic approximation algorithm (\ref{sa-0}) under conditions (\ref{sa-1}) and (\ref{g}), which is consistent with Theorem \ref{thm-sa1}.
\end{rem}

\section{Proofs of main results}

\begin{lem}\label{lem-1}
Let $\beta_{k}^n(c):=\prod_{j=k}^n \left(1+\frac{c}{j+1}\right)$, where $c<0$. We have
\begin{enumerate}[$(i)$]
\item For any $n\ge 1$ and $\left(-2c-1 \right)\vee 1 \le k\le n$, we have
\beq
\exp\left(-\frac{c^2}{\left(-2c-1 \right)\vee 1}\right) \left(\frac{n+1}{k}\right)^c\le \beta_{k}^n(c)\le\left(\frac{n}{k+1}\right)^c.
\deq

\item For $c<-1/2$, we have
\beq\label{lem1}
\lim_{n\rightarrow\infty} \frac{\left(\sum_{k=0}^n (k+1)^{-2} \left(\beta_{k}^n(c)\right)^2\right)^{-1}}{(-2c-1)n}=1.
\deq

\end{enumerate}
\end{lem}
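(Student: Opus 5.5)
For part (i) I would take logarithms of the product. The plan is to write
$$\log \beta_k^n(c)=\sum_{j=k}^n \log\left(1+\frac{c}{j+1}\right),$$
which makes sense because $k\ge (-2c-1)\vee 1$ gives $j+1\ge -2c$, so $1+\frac{c}{j+1}\ge \frac12>0$.

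\textbf{Upper bound.} Use $\log(1+x)\le x$, which gives
$$\log\beta_k^n(c)\le c\sum_{j=k}^n \frac{1}{j+1}.$$
Since $c<0$, this only needs a lower bound on the harmonic sum, namely
$$\sum_{j=k}^n \frac{1}{j+1}\ge \int_{k+1}^{n+2}\frac{dx}{x}\ge \log\frac{n}{k+1}.$$
Exponentiating yields $\beta_k^n(c)\le \left(\frac{n}{k+1}\right)^c$.

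\textbf{Lower bound.} Use $\log(1+x)\ge x-x^2$, valid for $|x|\le 1/2$; the constraint $j+1\ge -2c$ is exactly what guarantees $|c/(j+1)|\le 1/2$. This gives
$$\log\beta_k^n(c)\ge c\sum_{j=k}^n\frac1{j+1}-c^2\sum_{j=k}^n\frac1{(j+1)^2}.$$
For the first term, since $c<0$, I need an upper bound on the harmonic sum:
$$\sum_{j=k}^n\frac1{j+1}\le \int_k^{n+1}\frac{dx}{x}=\log\frac{n+1}{k}.$$
For the second term,
$$\sum_{j\ge k}\frac1{(j+1)^2}\le \frac1k\le \frac{1}{(-2c-1)\vee 1}.$$
Together these produce the factor $\exp\left(-\frac{c^2}{(-2c-1)\vee1}\right)\left(\frac{n+1}{k}\right)^c$.

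For part (ii) I would split the sum at a fixed index $k_0=\lceil(-2c-1)\vee1\rceil$. The indexing needs care: the upper bound in (i) gives $\beta_k^n\le (n/(k+1))^c$, so the main term is governed by $\sum_{k} (k+1)^{-2}(k+1)^{-2c}\,n^{2c}$ up to bounded factors.

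\textbf{Finite head.} The terms with $k<k_0$ are finitely many. Each is $\beta_k^n(c)=\beta_k^{k_0-1}(c)\,\beta_{k_0}^n(c)=O(n^{c})$, so their total contribution is $O(n^{2c})$. Since $2c<-1$, this is $o(n^{-1})$ and negligible.

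\textbf{Tail.} The two-sided bounds from (i) only give the sum up to constants, not the exact constant $1/(-2c-1)$, so the constant $e^{-c^2/k}$ must be sharpened. Apply the same expansion starting from a large $m$ instead of $k_0$. For $k\ge m$ this gives
$$\beta_k^n(c)=\left(\frac{n}{k}\right)^{c}\left(1+O\!\left(\tfrac1m\right)+O\!\left(\tfrac1k\right)\right)$$
uniformly, and hence
$$\sum_{k=m}^n (k+1)^{-2}\beta_k^n(c)^2=(1+O(1/m))\,n^{2c}\sum_{k=m}^n k^{-2-2c}.$$
Because $-2-2c>-1$,
$$\sum_{k=m}^n k^{-2-2c}=\frac{n^{-1-2c}}{-2c-1}\,(1+o(1)).$$
The terms with $k<m$ are again $O_m(n^{2c})=o(1/n)$. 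Multiplying out gives the sum $\sim \frac{1}{(-2c-1)n}$; inverting and letting $n\to\infty$ and then $m\to\infty$ gives the limit $1$.

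The main obstacle is this uniform sharpening of the multiplicative constants: the crude constants in (i) do not give the exact limit. It is handled by the fact that the error terms $c^2\sum_{j\ge k}(j+1)^{-2}$ vanish as $k\to\infty$, together with the fact that the mass of the sum concentrates at $k$ comparable to $n$.
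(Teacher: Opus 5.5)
Your proposal is correct and follows essentially the same route as the paper. Part (i) uses $\log(1+x)\le x$ and $\log(1+x)\ge x-x^2$ on $|x|\le 1/2$, integral comparison for the harmonic sums, and $\sum_{j\ge k}(j+1)^{-2}\le 1/k$. Part (ii) splits at a large fixed index, shows the finitely many head terms are $O(n^{2c})=o(1/n)$, and approximates the tail within a factor $1\pm\varepsilon$. Your explicit observation that the constant $e^{-c^2/((-2c-1)\vee 1)}$ in (i) must be sharpened to $e^{-c^2/k}\to 1$ for large $k$ makes precise a step the paper only asserts ``from (i)''.
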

\begin{proof}
From the inequalities
$$
1+x\le e^{x} \ \ \text{and} \ \ 1-x\ge e^{-x-x^2},\ \ \text{for all}\ \  0<x<1/2,
$$
we can get that for any $n\ge 1$ and $\left(-2c-1 \right)\vee 1 \le k\le n$,
$$
\exp\left(\sum_{j=k}^n \frac{c}{j+1}- \sum_{j=k}^n\frac{c^2}{(j+1)^2}\right)\le \beta_{k}^n(c)\le \exp\left(\sum_{j=k}^n \frac{c}{j+1}\right).
$$
By using the elementary inequality we know that
$$
\sum_{j=k}^n\frac{c^2}{(j+1)^2}\le \frac{c^2}{k}\le \frac{c^2}{\left(-2c-1 \right)\vee 1},
$$
then we have
$$
\exp\left(-\frac{c^2}{\left(-2c-1 \right)\vee 1}\right)\exp\left(\sum_{j=k}^n \frac{c}{j+1}\right)\le\beta_{k}^n(c)\le \exp\left(\sum_{j=k}^n \frac{c}{j+1}\right).
$$
For $k>1$, using the inequalities
$$
\ln n-\ln (k+1)\le \sum_{i=k}^n \frac{1}{i+1}\le \ln (n+1)- \ln k,
$$
we have, for any $n\ge 1$ and $\left(-2c-1 \right)\vee 1 \le k\le n$,
\beq\label{lem-1-1}
\exp\left(-\frac{c^2}{\left(-2c-1 \right)\vee 1}\right) \left(\frac{n+1}{k}\right)^c\le \beta_{k}^n(c)\le\left(\frac{n}{k+1}\right)^c.
\deq
From (\ref{lem-1-1}), we have that for any $\varepsilon>0$ and $c<-\frac{1}{2}$, there exists a positive constant $N_0$ such that
\beq\label{lem-1-2}
1-\varepsilon\le (-2c-1)n \sum_{k=N_0}^n (k+1)^{-2} \left(\beta_{k}^n(c)\right)^2\le 1+\varepsilon.
\deq
Furthermore, using (\ref{lem-1-1}) again, for $\left(-2c-1 \right)\vee 1 \le k\le N_0$, we have
\beq\label{lem-1-3}
n \left(\beta_{k}^n(c)\right)^2\rightarrow 0 \ \ \text{as} \ \ n\rightarrow\infty.
\deq
For the case $0\le k\le \left(-2c-1 \right)\vee 1$, noting that
$$
n \left(\beta_{k}^n(c)\right)^2=n \prod_{j=k}^{\left(-2c-1 \right) \vee 1} \left(1+\frac{c}{j+1}\right)^2 \prod_{j=\left(-2c-1\right)\vee 1+1}^n \left(1+\frac{c}{j+1}\right)^2,
$$
using (\ref{lem-1-1}) again we can get that the limit (\ref{lem-1-3}) also holds. Hence, from (\ref{lem-1-2}) and (\ref{lem-1-3}), (\ref{lem1}) can be proved.
\end{proof}

\begin{rem}\label{rem-1}
From the proof of the Lemma \ref{lem-1} we know that
$$
\left(\sum_{k=0}^n (k+1)^{-2} \left(\beta_{k}^n(c)\right)^2\right)^{-1}= O(n),\  \ \ n\rightarrow\infty.
$$
\end{rem}

\begin{lem}\label{lem-2}\cite[Theorem 3.13]{M-C} Let $\{Y_k, \FF_k\}_{k\ge 1}$ be a martingale difference sequence with $a_k\le Y_k\le b_k$ for each $k$, for suitable constants $a_k, b_k$. Then for any $t\ge 0$,
$$
\pp\left(\left|\sum_{k=1}^n Y_k\right|\ge t\right)\le 2\exp\left(-\frac{2t^2}{\sum_{k=1}^n\left(b_k-a_k\right)^2}\right).
$$
\end{lem}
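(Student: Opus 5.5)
This is the Azuma--Hoeffding inequality. I would prove it by the exponential (Chernoff) method, combining a conditional version of Hoeffding's lemma with the tower property. Write $S_n=\sum_{k=1}^n Y_k$ and $D_n=\sum_{k=1}^n (b_k-a_k)^2$. Since $\{Y_k,\FF_k\}$ is a martingale difference sequence, $\ee[Y_k\mid \FF_{k-1}]=0$, with $\FF_0$ taken to be trivial if it is not specified. We may assume $D_n>0$, since otherwise every $Y_k=0$ almost surely and there is nothing to prove. By symmetry it suffices to show
$$
\pp(S_n\ge t)\le \exp\left(-\frac{2t^2}{D_n}\right).
$$
Applying the same bound to the martingale differences $-Y_k$, which lie in $[-b_k,-a_k]$ (an interval of the same length), and taking a union bound gives the factor $2$.

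\textbf{Step 1 (conditional Hoeffding lemma).} For every $s\in\rr$ and every $k$,
$$
\ee\left[e^{sY_k}\mid\FF_{k-1}\right]\le \exp\left(\frac{s^2(b_k-a_k)^2}{8}\right)\quad a.s.
$$
To prove this, fix $k$ and write $a=a_k$, $b=b_k$. If $a=b$, then $Y_k=a$ almost surely, and the zero conditional mean forces $Y_k=0$, so the bound is trivial. Suppose $a<b$; then necessarily $a\le 0\le b$. By convexity of $y\mapsto e^{sy}$ on $[a,b]$,
$$
e^{sy}\le \frac{b-y}{b-a}e^{sa}+\frac{y-a}{b-a}e^{sb}.
$$
Taking conditional expectations and using $\ee[Y_k\mid\FF_{k-1}]=0$ gives
$$
\ee\left[e^{sY_k}\mid\FF_{k-1}\right]\le \frac{b}{b-a}e^{sa}-\frac{a}{b-a}e^{sb}=e^{L(h)}.
$$
Here $h=s(b-a)$, $p=-a/(b-a)\in[0,1]$, and $L(h)=-hp+\log(1-p+pe^h)$. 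We have $L(0)=L'(0)=0$. Also $L''(h)=q(1-q)\le 1/4$, where $q=pe^h/(1-p+pe^h)$. Taylor's theorem then gives $L(h)\le h^2/8$, which is the claim. I expect this step to be the only real obstacle. The bounds $a_k,b_k$ are deterministic constants, so the conditional argument goes through pointwise with no measurability issues.

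\textbf{Step 2 (iteration).} By the tower property and Step 1,
$$
\ee e^{sS_n}=\ee\left[e^{sS_{n-1}}\,\ee\left[e^{sY_n}\mid\FF_{n-1}\right]\right]\le e^{s^2(b_n-a_n)^2/8}\,\ee e^{sS_{n-1}}.
$$
Inducting on $n$ yields $\ee e^{sS_n}\le \exp(s^2D_n/8)$.

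\textbf{Step 3 (Chernoff and optimization).} For $s>0$, Markov's inequality gives
$$
\pp(S_n\ge t)\le e^{-st}\,\ee e^{sS_n}\le \exp\left(-st+\frac{s^2D_n}{8}\right).
$$
Choosing $s=4t/D_n$ gives the bound $\exp(-2t^2/D_n)$. Combined with the symmetric bound for $-S_n$, this proves the lemma.
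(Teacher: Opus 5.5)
Your proof is correct and complete. It is the standard proof of the Azuma--Hoeffding inequality: the conditional Hoeffding lemma, iteration via the tower property, the Chernoff bound with $s=4t/D_n$, and symmetrization for the factor $2$. The paper gives no proof of its own; it quotes the result from \cite[Theorem 3.13]{M-C}, and your argument is essentially the proof behind that citation. Your convention $\ee[Y_1\mid\FF_0]=0$ is exactly the mean-zero condition on $Y_1$ that the inequality requires.
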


\begin{lem}\label{lem-4}
Let $(X_n)_{n\ge 0}$ be a stochastic process taking values in $\rr$ and satisfying the stochastic approximation algorithm  (\ref{sa-0}).
Assume that  the conditions (\ref{sa-2}) and (\ref{g}) hold.
Then, for any $\varepsilon>0$ and $F>0$, there exists a positive constant $\delta\in(0,1)$ such that for all sufficiently large $n$, we have
\begin{align*}
\pp\left(\left|X_{n+1}-x^*\right|\ge \varepsilon\right)&\le \pp\Big(\left|X_{[\delta n]}-x^*\right|>F\Big)+
\pp\left(b \left|\sum_{i=[\delta n]}^n \frac{U_{i+1}}{i+1}\right|\ge 2\varepsilon\right)\\
&\ \ \   +\sum_{k=[\delta n]}^n\pp\left(b \left|\sum_{i=k+1}^n \frac{U_{i+1}}{i+1}\right|\ge \frac{\varepsilon}{4}\right).
\end{align*}
Here, $[\delta n]$ denotes the integer part of $\delta n$. The constant $\delta$ can be chosen to satisfy
$$
0<\delta<\exp\left(-\frac{2(F+\varepsilon)}{b K_1 \varepsilon}\right),
$$
where $K_1$ is defined in (\ref{g}).
\end{lem}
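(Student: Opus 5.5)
The plan is to prove the inclusion of events
$$
\{|X_{n+1}-x^*|\ge\varepsilon\}\subset\{|X_{[\delta n]}-x^*|>F\}\cup A_n\cup\bigcup_{k=[\delta n]}^n B_{k,n},
$$
where $A_n=\{b|\sum_{i=[\delta n]}^n U_{i+1}/(i+1)|\ge 2\varepsilon\}$ and $B_{k,n}=\{b|\sum_{i=k+1}^n U_{i+1}/(i+1)|\ge\varepsilon/4\}$. The lemma then follows by the union bound. By symmetry I treat only $X_{n+1}-x^*\ge\varepsilon$ on $\{|X_{[\delta n]}-x^*|\le F\}$; the case $X_{n+1}-x^*\le-\varepsilon$ is identical.

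\emph{Sign structure of $g$.} By (\ref{sa-1}), $g$ is continuous. By (\ref{g}), $g$ vanishes only at $x^*$, and $g'(x^*)<0$. Hence $g>0$ on $(-\infty,x^*)$ and $g<0$ on $(x^*,\infty)$, and (\ref{g}) gives $g(x)\le -K_1(x-x^*)$ for $x>x^*$. So the drift term always pushes toward $x^*$, and it is at least $K_1\varepsilon/2$ in size whenever $X_i-x^*\ge\varepsilon/2$.

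\emph{Case 1: $X_i-x^*\ge\varepsilon/2$ for all $i\in[[\delta n],n]$.} Sum the recursion (\ref{sa-0}) from $[\delta n]$ to $n$:
$$
X_{n+1}-X_{[\delta n]}=b\sum_{i=[\delta n]}^n\frac{g(X_i)}{i+1}+b\sum_{i=[\delta n]}^n\frac{U_{i+1}}{i+1}.
$$
The drift sum is at most $-\frac{bK_1\varepsilon}{2}\sum_{i=[\delta n]}^n\frac1{i+1}$. For large $n$ this is about $-\frac{bK_1\varepsilon}{2}\log(1/\delta)$. The condition $\delta<\exp(-2(F+\varepsilon)/(bK_1\varepsilon))$ makes this strictly less than $-(F+\varepsilon)$, with a margin that absorbs the $o(1)$ error in the harmonic sum. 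Since $X_{n+1}-x^*\ge\varepsilon$ and $X_{[\delta n]}-x^*\le F$, we get
$$
b\sum_{i=[\delta n]}^n\frac{U_{i+1}}{i+1}\ge \varepsilon-F+(F+\varepsilon)=2\varepsilon,
$$
so the event $A_n$ occurs.

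\emph{Case 2: there is a last index $k\in[[\delta n],n]$ with $X_k-x^*<\varepsilon/2$.} Then $X_i-x^*\ge\varepsilon/2>0$ for $k<i\le n+1$.
\begin{itemize}
\item First I bound the overshoot. If $0\le X_k-x^*<\varepsilon/2$, then $g(X_k)\le0$, so $X_{k+1}-x^*\le\varepsilon/2+bK_u/(k+1)$.
\item If instead $X_k<x^*$, then $0\le g(X_k)\le K_2|X_k-x^*|$. This gives $X_{k+1}-x^*\le-|X_k-x^*|\bigl(1-\tfrac{bK_2}{k+1}\bigr)+\tfrac{bK_u}{k+1}\le \tfrac{bK_u}{k+1}$ once $k\ge\delta n-1\ge bK_2$.
\item Either way $X_{k+1}-x^*\le\varepsilon/2+bK_u/(\delta n)\le 3\varepsilon/4$ for $n$ large.
\item Next, on $[k+1,n]$ all drifts $g(X_i)$ are negative. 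Hence $X_{n+1}-X_{k+1}\le b\sum_{i=k+1}^n U_{i+1}/(i+1)$.
\item This forces $b\sum_{i=k+1}^n U_{i+1}/(i+1)\ge\varepsilon-3\varepsilon/4=\varepsilon/4$, i.e.\ $B_{k,n}$ occurs.
\end{itemize}

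Only boundedness of $U$ is used, which is consistent with the later remark after Theorem \ref{lem-5}. I expect the main obstacle to be Case 2: handling the crossing step from the far side of $x^*$. The key point is to use the upper bound $K_2$ in (\ref{g}) to show that a single step cannot jump far past $x^*$ once $n$ is large, even when $X_k$ itself is far away. The remaining care is the quantitative harmonic-sum estimate in Case 1, which I would make explicit with $\sum_{i=[\delta n]}^n\frac1{i+1}\ge\log\frac{n+2}{\delta n+1}\to\log(1/\delta)$.
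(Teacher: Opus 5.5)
Your proposal is correct and is essentially the paper's argument: your Case 1 is the paper's event $B(\varepsilon,\delta)$, handled by the same harmonic-sum drift estimate (\ref{p-6}). Your Case 2 is its decomposition into last-exit events $A_k(\varepsilon)$, handled by the same one-step overshoot inclusion (\ref{X}) and negativity of the drift thereafter, after which the two one-sided bounds are combined. One small point: you get continuity of $g$ (hence the global sign structure) from (\ref{sa-1}), which is not among the lemma's stated hypotheses; the paper tacitly makes the same assumption when it passes from (\ref{g}) and the local statement (\ref{l1}) to the global bound (\ref{g1}), so this is not a gap in your reasoning.
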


\begin{proof} {\bf Step 1:} Firstly, we shall give the inequality of $\dsp\pp\Big(X_{n+1}-x^*>\varepsilon\Big)$.
For any $\varepsilon>0$ and $\delta\in(0,1)$, let us define the following events:
$$
B^{(n)}(\varepsilon,\delta)=B(\varepsilon,\delta)=\left\{X_{[\delta n]}-x^*\ge \frac{\varepsilon}{2}, \cdots, X_n-x^*\ge\frac{\varepsilon}{2}, X_{n+1}-x^*\ge \varepsilon\right\},
$$
$$
A^{(n)}_0(\varepsilon)=A_0(\varepsilon)=\left\{X_1-x^{*}\ge \frac{\varepsilon}{2}, \cdots, X_n-x^*\ge \frac{\varepsilon}{2}, X_{n+1}-x^*\ge\varepsilon\right\},
$$
$$
A^{(n)}_k(\varepsilon)=A_k(\varepsilon)=\left\{X_k-x^*<\frac{\varepsilon}{2}, X_{k+1}-x^*\ge \frac{\varepsilon}{2}, \cdots, X_n-x^*\ge \frac{\varepsilon}{2}, X_{n+1}-x^*\ge\varepsilon\right\},
$$
where $k=1, 2, \cdots, n$. Hence, for any $F>0$, we can get that
\beq\label{p-2}
\aligned
&\pp\Big(X_{n+1}-x^*>\varepsilon\Big)\\
=&\pp\Big(X_{n+1}-x^*>\varepsilon, B(\varepsilon,\delta)\Big)+\pp\Big(X_{n+1}-x^*>\varepsilon,\left(B(\varepsilon,\delta)\right)^c\Big)\\
\le& \pp\Big(X_{[\delta n]}-x^*>F\Big)+\pp\Big(X_{[\delta n]}-x^*\le F, B(\varepsilon,\delta)\Big)+\pp\Big(X_{n+1}-x^*>\varepsilon,\left(B(\varepsilon,\delta)\right)^c\Big).
\endaligned
\deq
From (\ref{g})  and (\ref{l1})  we know that
\beq\label{g1}
-K_2(x-x^*)\le g(x)\le-K_1(x-x^*)\ \ \text{if} \ \ x>x^*.
\deq
For any $\varepsilon>0$ and $F>0$, we can choose
$$
\delta=\delta(\varepsilon, F, b, K_1)<\exp\left(-\frac{2(F+\varepsilon)}{b K_1 \varepsilon}\right)
$$
small enough such that for all $n$ large enough,
\beq\label{p-6}
F+b\sum_{i=[\delta n]}^n \frac{-K_1\frac{\varepsilon}{2}}{i+1}<-\varepsilon.
\deq
Hence, from (\ref{sa-0}),  (\ref{g1}) and  (\ref{p-6}), we have
\begin{align}\label{p-3}
&\pp\Big(X_{[\delta n]}-x^*\le F, B(\varepsilon,\delta)\Big)\nonumber\\
=&\pp\left(X_{[\delta n]}-x^*\le F, X_{[\delta n]}-x^*\ge \frac{\varepsilon}{2}, \cdots, X_n-x^*\ge\frac{\varepsilon}{2}, X_{n+1}-x^*\ge \varepsilon \right)\nonumber\\
=& \pp\Bigg(\varepsilon\le X_{n+1}-x^*=X_{[\delta n]}-x^*+ b\sum_{i=[\delta n]}^n \frac{g(X_i)+U_{i+1}}{i+1},\nonumber\\
&\ \ \ \ \ \ \ \ \ \ \ \ \ \ \ \ \ X_{[\delta n]}-x^*\le F, X_{[\delta n]}-x^*\ge \frac{\varepsilon}{2}, \cdots, X_n-x^*\ge\frac{\varepsilon}{2}\Bigg)\nonumber\\
\le& \pp\Bigg(\varepsilon\le X_{n+1}-x^*=X_{[\delta n]}-x^*+ b\sum_{i=[\delta n]}^n \frac{-K_1\frac{\varepsilon}{2}+U_{i+1}}{i+1},\nonumber\\
&\ \ \ \ \ \ \ \ \ \ \ \ \ \ \ \ \ X_{[\delta n]}-x^*\le F, X_{[\delta n]}-x^*\ge \frac{\varepsilon}{2}, \cdots, X_n-x^*\ge\frac{\varepsilon}{2}\Bigg)\nonumber\\
\le& \pp\left(\varepsilon\le  F+b\sum_{i=[\delta n]}^n \frac{-K_1\frac{\varepsilon}{2}+U_{i+1}}{i+1}\right)\nonumber\\
\le& \pp\left(b \sum_{i=[\delta n]}^n \frac{U_{i+1}}{i+1}\ge 2\varepsilon\right).
\end{align}
Next we estimate the probability $\pp\Big(X_{n+1}-x^*>\varepsilon,\left(B(\varepsilon,\delta)\right)^c\Big)$. Firstly, we know that
\begin{align*}
\pp\Big(X_{n+1}-x^*>\varepsilon,\left(B(\varepsilon,\delta)\right)^c\Big)\le \sum_{k=[\delta n]}^n \pp(A_k(\varepsilon)).
\end{align*}
From (\ref{sa-0}), (\ref{sa-2}) and (\ref{g}) we can get that
$$
\aligned
X_{k+1}-x^{*}=&X_k-x^{*}+\frac{b}{k+1} \left(g(X_k)+ U_{k+1}\right)\\
\le& X_k-x^{*}+\frac{b K_u}{k+1}+\max\left\{0, \frac{-K_2b\left( X_k-x^*\right)}{k+1}\right\}.
\endaligned
$$
Then for any $\varepsilon>0$, there exists a positive constant $k_1$, such that for all $k>k_1$, we have
\beq\label{X}
\left\{X_k-x^{*}\le \frac{1}{2} \varepsilon\right\}\subset \left\{X_{k+1}-x^{*}\le\frac{3}{4}\varepsilon\right\}.
\deq
Hence for all $n$ large enough and $k\ge [\delta n]$, from (\ref{sa-0}), (\ref{g1}) and (\ref{X}), we have
\begin{align*}
\pp(A_k(\varepsilon))&=\pp\left(X_k-x^*<\frac{\varepsilon}{2}, X_{k+1}-x^*\ge \frac{\varepsilon}{2}, \cdots, X_n-x^*\ge \frac{\varepsilon}{2}, X_{n+1}-x^*\ge\varepsilon\right)\\
&=\pp\Bigg(X_k-x^*<\frac{\varepsilon}{2},  X_{k+1}-x^*+b\sum_{i=k+1}^n \frac{g(X_i)+U_{i+1}}{i+1} \ge\varepsilon,\ \\
&\ \ \ \ \ \ \ \ \ \ \ \ \ \ \ \   X_{k+1}-x^*\ge \frac{\varepsilon}{2}, \cdots, X_n-x^*\ge \frac{\varepsilon}{2}\Bigg)\\
&\le \pp\Bigg(X_k-x^*<\frac{\varepsilon}{2},  X_{k+1}-x^*+b \sum_{i=k+1}^n \frac{-K_1\frac{\varepsilon}{2}+U_{i+1}}{i+1} \ge\varepsilon,\ \\
&\ \ \ \ \ \ \ \ \ \ \ \ \ \ \ \   X_{k+1}-x^*\ge \frac{\varepsilon}{2}, \cdots, X_n-x^*\ge \frac{\varepsilon}{2}\Bigg)\\
&\le \pp\left(X_k-x^*<\frac{\varepsilon}{2}, X_{k+1}-x^*+b \sum_{i=k+1}^n \frac{U_{i+1}}{i+1} \ge \varepsilon \right)\\
&\le \pp\left(X_{k+1}-x^{*}\le\frac{3}{4}\varepsilon, X_{k+1}-x^*+b \sum_{i=k+1}^n \frac{U_{i+1}}{i+1} \ge\varepsilon\right)\\
&\le \pp\left(b \sum_{i=k+1}^n \frac{U_{i+1}}{i+1}\ge \frac{\varepsilon}{4}\right).
\end{align*}
Then we have
\beq\label{lem4-1}
\pp\Big(X_{n+1}-x^*>\varepsilon,\left(B(\varepsilon,\delta)\right)^c\Big)\le \sum_{k=[\delta n]}^n \pp(A_k(\varepsilon))\le \sum_{k=[\delta n]}^n\pp\left(b \sum_{i=k+1}^n \frac{U_{i+1}}{i+1}\ge\frac{\varepsilon}{4}\right).
\deq
Hence from (\ref{p-2}), (\ref{p-3}) and (\ref{lem4-1}), for any $\varepsilon>0$ and $F>0$, there exists a positive constant $\delta\in(0,1)$ such that for all sufficiently large $n$, we have
\beq\label{zong-1}
\aligned
\pp\left(X_{n+1}-x^*\ge \varepsilon\right)&\le \pp\Big(X_{[\delta n]}-x^*>F\Big)+
\pp\left(b \sum_{i=[\delta n]}^n \frac{U_{i+1}}{i+1}\ge 2\varepsilon\right)\\
&\ \ \   +\sum_{k=[\delta n]}^n\pp\left(b \sum_{i=k+1}^n \frac{U_{i+1}}{i+1}\ge \frac{\varepsilon}{4}\right).
\endaligned
\deq

{\bf Step 2:} Next we shall obtain the inequality of $\dsp\pp\Big(X_{n+1}-x^*<-\varepsilon\Big)$.
For any $\varepsilon>0$ and $\delta\in(0,1)$, let us define the following events:
$$
C^{(n)}(\varepsilon,\delta)=C(\varepsilon,\delta)=\left\{X_{[\delta n]}-x^*\le -\frac{\varepsilon}{2}, \cdots, X_n-x^*\le-\frac{\varepsilon}{2}, X_{n+1}-x^*\le -\varepsilon\right\},
$$
$$
D^{(n)}_0(\varepsilon)=D_0(\varepsilon)=\left\{X_1-x^{*}\le -\frac{\varepsilon}{2}, \cdots, X_n-x^*\le -\frac{\varepsilon}{2}, X_{n+1}-x^*\le-\varepsilon\right\},
$$
$$
\aligned
D^{(n)}_k(\varepsilon)=&D_k(\varepsilon)\\
=&\left\{X_k-x^*>-\frac{\varepsilon}{2}, X_{k+1}-x^*\le -\frac{\varepsilon}{2}, \cdots, X_n-x^*\le -\frac{\varepsilon}{2}, X_{n+1}-x^*\le-\varepsilon\right\},
\endaligned
$$
where $k=1, 2, \cdots, n$. Hence, for any $F>0$, we can get that
\beq\label{p-2-1}
\aligned
&\pp\Big(X_{n+1}-x^*<-\varepsilon\Big)\\
=&\pp\Big(X_{n+1}-x^*<-\varepsilon, C(\varepsilon,\delta)\Big)+\pp\Big(X_{n+1}-x^*<-\varepsilon,\left(C(\varepsilon,\delta)\right)^c\Big)\\
\le& \pp\Big(X_{[\delta n]}-x^*<-F\Big)+\pp\Big(X_{[\delta n]}-x^*\ge- F, C(\varepsilon,\delta)\Big)+\pp\Big(x_{n+1}-x^*<-\varepsilon,\left(C(\varepsilon,\delta)\right)^c\Big).
\endaligned
\deq
From (\ref{g})  and (\ref{l1})  we know that
\beq\label{g2}
-K_1(x-x^*)\le g(x)\le -K_2(x-x^*)\ \ \text{if} \ \ x<x^*.
\deq
For any $\varepsilon>0$ and $F>0$, we can choose
$$
\delta=\delta(\varepsilon, F, b, K_1)<\exp\left(-\frac{2(F+\varepsilon)}{b K_1 \varepsilon}\right)
$$
small enough such that for all $n$ large enough,
\beq\label{p-61}
F+b\sum_{i=[\delta n]}^n \frac{-K_1\frac{\varepsilon}{2}}{i+1}<-\varepsilon.
\deq
Hence, from (\ref{sa-0}),  (\ref{g2}) and  (\ref{p-61}), we have
\begin{align}\label{p-3-1}
&\pp\Big(X_{[\delta n]}-x^*\ge -F, C(\varepsilon,\delta)\Big)\nonumber\\
=&\pp\left(X_{[\delta n]}-x^*\ge -F, X_{[\delta n]}-x^*\le -\frac{\varepsilon}{2}, \cdots, X_n-x^*\le-\frac{\varepsilon}{2}, X_{n+1}-x^*\le -\varepsilon \right)\nonumber\\
=& \pp\Bigg( X_{n+1}-x^*=X_{[\delta n]}-x^*+b\sum_{i=[\delta n]}^n \frac{g(X_i)+U_{i+1}}{i+1}\le-\varepsilon,\nonumber\\
&\ \ \ \ \ \ \ \ \ \ \ \ \ \ \ \ \ X_{[\delta n]}-x^*\ge -F, X_{[\delta n]}-x^*\le -\frac{\varepsilon}{2}, \cdots, X_n-x^*\le-\frac{\varepsilon}{2}\Bigg)\nonumber\\
\le& \pp\Bigg( X_{n+1}-x^*=X_{[\delta n]}-x^*+b\sum_{i=[\delta n]}^n \frac{K_1 \frac{\varepsilon}{2}+U_{i+1}}{i+1}\le -\varepsilon,\nonumber\\
&\ \ \ \ \ \ \ \ \ \ \ \ \ \ \ \ \ X_{[\delta n]}-x^*\ge-F, X_{[\delta n]}-x^*\le -\frac{\varepsilon}{2}, \cdots, X_n-x^*\le-\frac{\varepsilon}{2}\Bigg)\nonumber\\
\le& \pp\left( -F+b\sum_{i=[\delta n]}^n \frac{K_1 \frac{\varepsilon}{2}+U_{i+1}}{i+1}\le-\varepsilon \right)\nonumber\\
\le& \pp\left(b\sum_{i=[\delta n]}^n \frac{U_{i+1}}{i+1}\le -2\varepsilon\right).
\end{align}
Next we estimate the probability $\pp\Big(X_{n+1}-x^*<-\varepsilon,\left(C(\varepsilon,\delta)\right)^c\Big)$. Firstly, we know that
\begin{align*}
\pp\Big(X_{n+1}-x^*<-\varepsilon,\left(C(\varepsilon,\delta)\right)^c\Big)\le \sum_{k=[\delta n]}^n \pp(D_k(\varepsilon)).
\end{align*}
 From (\ref{sa-0}), (\ref{sa-2}) and (\ref{g}) we can get that
$$
\aligned
X_{k+1}-x^{*}=&X_k-x^{*}+\frac{b}{k+1} \left(g(X_k)+U_{k+1}\right)\\
\ge& X_k-x^{*}-\frac{bK_u}{k+1}+\min\left\{0, \frac{-K_1 b \left( X_k-x^*\right)}{k+1}\right\}.
\endaligned
$$
Then for any $\varepsilon>0$, there exists a positive constant $k_2$, such that for all $k>k_2$, we have
\beq\label{X1}
\left\{X_k-x^{*}\ge- \frac{1}{2} \varepsilon\right\}\subset \left\{X_{k+1}-x^{*}\ge-\frac{3}{4}\varepsilon\right\}.
\deq
Hence for all $n$ large enough and $k\ge [\delta n]$, from (\ref{sa-0}), (\ref{g2}) and (\ref{X1}), we have
\begin{align*}
\pp(D_k(\varepsilon))&=\pp\left(X_k-x^*>-\frac{\varepsilon}{2}, X_{k+1}-x^*\le -\frac{\varepsilon}{2}, \cdots, X_n-x^*\le -\frac{\varepsilon}{2}, X_{n+1}-x^*\le-\varepsilon\right)\\
&=\pp\Bigg(X_k-x^*>-\frac{\varepsilon}{2},  X_{k+1}-x^*+b\sum_{i=k+1}^n \frac{g(X_i)+U_{i+1}}{i+1} \le-\varepsilon,\ \\
&\ \ \ \ \ \ \ \ \ \ \ \ \ \ \ \   X_{k+1}-x^*\le -\frac{\varepsilon}{2}, \cdots, X_n-x^*\le -\frac{\varepsilon}{2}\Bigg)\\
&\le \pp\Bigg(X_k-x^*>-\frac{\varepsilon}{2},  X_{k+1}-x^*+b\sum_{i=k+1}^n \frac{K_1 \frac{\varepsilon}{2}+U_{i+1}}{i+1} \le-\varepsilon,\ \\
&\ \ \ \ \ \ \ \ \ \ \ \ \ \ \ \   X_{k+1}-x^*\le -\frac{\varepsilon}{2}, \cdots, X_n-x^*\le -\frac{\varepsilon}{2}\Bigg)\\
&\le \pp\left(X_k-x^*>-\frac{\varepsilon}{2}, X_{k+1}-x^*+b\sum_{i=k+1}^n \frac{U_{i+1}}{i+1} \le -\varepsilon \right)\\
&\le \pp\left(X_{k+1}-x^{*}\ge-\frac{3}{4}\varepsilon, X_{k+1}-x^*+b\sum_{i=k+1}^n \frac{U_{i+1}}{i+1} \le-\varepsilon\right)\\
&\le \pp\left( \sum_{i=k+1}^n \frac{U_{i+1}}{i+1}\le -\frac{\varepsilon}{4}\right).
\end{align*}
Then we have
\beq\label{lem4-2}
\pp\Big(X_{n+1}-x^*<-\varepsilon,\left(C(\varepsilon,\delta)\right)^c\Big)\le \sum_{k=[\delta n]}^n \pp(D_k(\varepsilon))\le \sum_{k=[\delta n]}^n\pp\left(b \sum_{i=k+1}^n \frac{U_{i+1}}{i+1}\le-\frac{\varepsilon}{4}\right).
\deq
Hence from (\ref{p-2-1}), (\ref{p-3-1}) and (\ref{lem4-2})  for any $\varepsilon>0$ and $F>0$, there exists a positive constant $\delta\in(0,1)$ such that for all sufficiently large $n$, we have
\beq\label{zong-2}
\aligned
\pp\left(X_{n+1}-x^*\ge \varepsilon\right)&\le \pp\Big(X_{[\delta n]}-x^*<-F\Big)+
\pp\left(b \sum_{i=[\delta n]}^n \frac{U_{i+1}}{i+1}\le- 2\varepsilon\right)\\
&\ \ \   +\sum_{k=[\delta n]}^n\pp\left(b \sum_{i=k+1}^n \frac{U_{i+1}}{i+1}\le- \frac{\varepsilon}{4}\right).
\endaligned
\deq
Combining (\ref{zong-1}) and (\ref{zong-2}), we complete the proof.
\end{proof}

\begin{lem}\label{lem-3}
Let $(X_n)_{n\ge 0}$ be a stochastic process taking values in $\rr$ and satisfying the stochastic approximation algorithm  (\ref{sa-0}).
Assume that  the conditions (\ref{sa-2}) and (\ref{g}) hold. Then recursion $X_n$ is bounded, i.e. there exist a positive constant $C$ such that
$$
\left|X_{n+1}-x^*\right|\le C\left(|X_{0}-x^*| n^{-b K_1}+1\right).
$$
\end{lem}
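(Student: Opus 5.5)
We prove the bound by a deterministic comparison argument for the error $Y_n:=X_n-x^*$, using only the boundedness $|U_{n}|\le K_u$ from (\ref{sa-2}) and the two-sided sign/growth condition (\ref{g}). Subtracting $x^*$ in (\ref{sa-0}) gives
$$
Y_{n+1}=Y_n+\frac{b}{n+1}g(X_n)+\frac{b}{n+1}U_{n+1}.
$$
Since $g$ has the unique zero $x^*$ and (\ref{g}) holds, $g$ is continuous and vanishes only at $x^*$. It is positive to the left of $x^*$ and negative to the right, by (\ref{l1}) and uniqueness, so $g(x)=-\theta(x)(x-x^*)$ with $K_1\le\theta(x)\le K_2$. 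Hence
$$
Y_{n+1}=\Big(1-\frac{b\theta_n}{n+1}\Big)Y_n+\frac{b}{n+1}U_{n+1},\qquad \theta_n:=\theta(X_n)\in[K_1,K_2].
$$

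First fix $n_0$ with $bK_2/(n_0+1)\le 1$. Then for $n\ge n_0$,
$$
0\le 1-\frac{b\theta_n}{n+1}\le 1-\frac{bK_1}{n+1},
$$
and therefore
$$
|Y_{n+1}|\le\Big(1-\frac{bK_1}{n+1}\Big)|Y_n|+\frac{bK_u}{n+1}.
$$
For the finitely many steps $n<n_0$ we use the crude bound
$$
|Y_{n+1}|\le (1+bK_2)|Y_n|+bK_u,
$$
which gives $|Y_{n_0}|\le C(|Y_0|+1)$ with $C$ depending only on $n_0,b,K_2,K_u$.

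Next iterate the linear inequality from $n_0$ to $n$:
$$
|Y_{n+1}|\le \beta_{n_0}^n(-bK_1)\,|Y_{n_0}|+bK_u\sum_{k=n_0}^n\frac{1}{k+1}\beta_{k+1}^n(-bK_1),
$$
with $\beta$ as in Lemma \ref{lem-1}. By the upper bound in Lemma \ref{lem-1}(i) (or directly from $1-x\le e^{-x}$ and $\sum_{j=k}^n\frac1{j+1}\ge\ln\frac{n+2}{k+1}$),
$$
\beta_k^n(-bK_1)\le \Big(\frac{k+1}{n+2}\Big)^{bK_1}.
$$
So the first term is at most $C(|Y_0|+1)n^{-bK_1}$.

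The main obstacle is that the second term must stay bounded uniformly in $n$. It is at most
$$
bK_u (n+2)^{-bK_1}\sum_{k=n_0}^n (k+2)^{bK_1-1}\le bK_u(n+2)^{-bK_1}\cdot\frac{C(n+3)^{bK_1}}{bK_1}\le C,
$$
by comparing the sum with an integral; this works for any $bK_1>0$. Alternatively, an induction shows that $|Y_n|\le M$ is preserved once $M\ge K_u/K_1$, since then
$$
\Big(1-\frac{bK_1}{n+1}\Big)M+\frac{bK_u}{n+1}\le M.
$$

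Combining the two terms gives $|Y_{n+1}|\le C(|Y_0|n^{-bK_1}+1)$. The constant $1$ absorbs the contribution $C n^{-bK_1}\le C$ and the terms with $n<n_0$. This is the claimed bound, and it shows that $X_n$ is bounded.
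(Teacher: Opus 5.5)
Your argument is correct and is essentially the paper's proof. You write $g(X_k)=-\theta_k(X_k-x^*)$ with $\theta_k\in[K_1,K_2]$, iterate the resulting linear recursion, bound the products by powers of $n$ as in Lemma~\ref{lem-1}, and control the noise part by $Cn^{-bK_1}\sum_{i\le n}(i+2)^{bK_1}/(i+1)\le C$. One caveat: the global sign of $g$ you use needs continuity of $g$, which comes from the smoothness behind (\ref{sa-1}) rather than from (\ref{g}) alone, as you claim; the paper makes the same tacit assumption.

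Your split at $n_0$ with $bK_2\le n_0+1$ is a worthwhile refinement. The paper's step $\bigl|1+\frac{bg(X_k)}{(k+1)(X_k-x^*)}\bigr|\le\bigl|1-\frac{bK_1}{k+1}\bigr|$ can fail for the finitely many $k$ with $bK_2>k+1$. For $k\ge n_0$ you have $0\le 1-\frac{b\theta_k}{k+1}\le 1-\frac{bK_1}{k+1}$, and the early steps only cost a constant factor.
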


\begin{proof}
From (\ref{sa-0}), (\ref{sa-2}), (\ref{g}) and Lemma \ref{lem-1} we know that
\begin{align*}
\left|X_{n+1}-x^*\right|&=\left|X_{n}-x^*+\frac{b}{n+1}g(X_n)+\frac{b}{n+1} U_{n+1}\right|\\
&=\left|\left(X_{n}-x^*\right)\left(1+\frac{b}{n+1}\frac{g(X_n)}{X_n-x^*}\right)+\frac{b}{n+1} U_{n+1}\right|\\
&=\left|\left(X_{0}-x^*\right) \prod_{k=0}^n \left(1+\frac{b g(X_k)}{(k+1)\left(X_{k}-x^*\right)}\right)\right. \\ &\ \ \ \ \left.+\sum_{i=0}^n \frac{bU_{i+1}}{i+1} \prod_{k=i+1}^n \left(1+\frac{b g(X_k)}{(k+1)\left(X_{k}-x^*\right)} \right)\right|\\
&\le \left|X_{0}-x^*\right| \prod_{k=0}^n \left|1-\frac{b K_1}{k+1}\right|+\sum_{i=0}^n \frac{bK_u}{i+1} \prod_{k=i+1}^n \left|1-\frac{b K_1}{k+1} \right|\\
&\le C |X_{0}-x^*| n^{-b K_1}+C  n^{-b K_1} \sum_{i=0}^n\frac{(i+2)^{bK_1}}{i+1}\\
&\le C\left(|X_{0}-x^*| n^{-b K_1}+1\right).
\end{align*}
\end{proof}

\begin{proof}[{\bf Proof of Theorem \ref{lem-5}}]
From Lemma \ref{lem-3}, we can choose $F=F(b, K_1, K_u)>0$ such that $\left|X_{n+1}-x^*\right|\le F$ for all $n$. Consequently, we  conclude that
$$
\pp\Big(\left|X_{[\delta n]}-x^*\right|>F\Big)=0.
$$
For any $\varepsilon>0$, we can choose $\delta=\delta(\varepsilon, F, b, K_1)$
small enough, such that for all $n$ large enough,
$$
F+b\sum_{i=[\delta n]}^n \frac{-K_1\frac{\varepsilon}{2}}{i+1}<-\varepsilon.
$$
Then, from Lemma \ref{lem-2} - Lemma \ref{lem-3},  for all large enough $n$, we have
\begin{align*}
&\ \ \ \pp\left(\left|X_{n+1}-x^*\right|\ge\varepsilon\right)\\
&\le
\pp\left(b \left|\sum_{i=[\delta n]}^n \frac{U_{i+1}}{i+1}\right|\ge 2\varepsilon\right)+\sum_{k=[\delta n]}^n\pp\left(b \left|\sum_{i=k+1}^n \frac{U_{i+1}}{i+1}\right|\ge\frac{ \varepsilon}{4}\right)\\
&\le 2\exp\left(- \frac{8\varepsilon^2}{b^2 K_u^2 \sum_{i=[\delta n]}^n \frac{1}{(i+1)^2}}\right)+2 \sum_{k=[\delta n]}^n \exp\left(- \frac{\varepsilon^2}{8b^2 K_u^2 \sum_{i=k+1}^n \frac{1}{(i+1)^2}}\right)\\
&\le 2 \exp\left(- \frac{C \varepsilon^2\delta n}{1-\delta}\right)+ 2 \sum_{k=[\delta n]}^n \exp\left(- \frac{C \varepsilon^2 }{k^{-1}-n^{-1}}\right)\\
&\le 2 \exp\left(- \frac{C \varepsilon^2\delta n}{1-\delta}\right)+ 2 \sum_{k=[\delta n]}^n \exp\left(- \frac{C \varepsilon^2 k }{1-\delta}\right)\\
&\le 2\exp\left(- \frac{C \varepsilon^2\delta n}{1-\delta}\right)+\frac{2\exp\left(- \frac{C \varepsilon^2 \delta n }{1-\delta}\right)}{1-\exp\left(- \frac{C \varepsilon^2  }{1-\delta}\right)}\\
&\le 2 \exp\left(- \frac{C \varepsilon^2\delta n}{1-\delta}\right)\left(1+\frac{1}{1-\exp\left(- \frac{C \varepsilon^2  }{1-\delta}\right)}\right).
\end{align*}
\end{proof}

\begin{proof}[{\bf Proof of Theorem \ref{lem-6}}]
In order to obtain claim (\ref{lem-6-0}), by the G\"{a}rtner-Ellis theorem \cite[Theorem 2.3.6]{D-Z}, we need to prove that the following
Cram\'{e}r functional holds: for any $\lambda \in \rr$,
\begin{align}\label{t-2}
\lim_{n\rightarrow\infty}\frac{1}{b^2_n} \log \ee\left[\exp\left(\lambda h_n b_n \sum_{k=0}^n \frac{b}{k+1}\beta_{k+1}^n\left(b\alpha_1\right)U_{k+1}\right)\right]=\frac{\lambda^2 \sigma^2}{2}.
\end{align}

Let $Y_1, \ldots, Y_n$ be independent and identically distributed  normal distributed random variables,  independent of the $(U_i)_{i\ge1}$, with $Y_i\sim N(0, \sigma^2)$, where $\sigma$ is defined as $(\ref{sa-6})$. For $0\le k\le n$,  let
\beq\label{a-3}
A_{k+1}:=h_n \frac{b}{k+1} \beta_{k+1}^n\left(b\alpha_1\right)  Y_{k+1},
\deq
then, by Gaussian integration and the definition of $h_n$, it is not difficult to see that
\begin{align}\label{a-4}
\ee \left[\exp\left(\lambda b_n \sum_{k=0}^n A_{k+1}\right)\right]&=\ee \left[\exp\left(\lambda b_n \sum_{k=0}^n h_n \frac{b}{k+1} \beta_{k+1}^n\left(b\alpha_1\right)  Y_{k+1}\right)\right]\nonumber\\
&=\exp\left(\frac{\lambda^2 b_n^2 \sigma^2 h_n^2 \sum_{k=0}^n\frac{b^2}{(k+1)^2} \left(\beta_{k+1}^n\left(b\alpha_1\right)\right)^2 }{2}\right)\nonumber\\
&=\exp\left(\frac{\lambda^2 b_n^2 \sigma^2 }{2}\right).
\end{align}
For $0\le k\le n$, let
\beq\label{a-1}
T_{k+1}:=h_n \frac{b}{k+1} \beta_{k+1}^n\left(b\alpha_1\right) U_{k+1},
\deq
then we know that
\beq\label{a-2}
h_n \sum_{k=0}^n \frac{b}{k+1}\beta_{k+1}^n\left(b\alpha_1\right)U_{k+1}:= \sum_{k=0}^n T_{k+1}.
\deq
If we can prove that
\beq\label{a-5}
\lim_{n\rightarrow\infty} \frac{1}{b^2_n} \log \frac{\ee \left[\exp\left(\lambda b_n \sum_{k=0}^n T_{k+1}\right)\right]}{\ee \left[\exp\left(\lambda b_n \sum_{k=0}^n A_{k+1}\right)\right]}=0,
\deq
from (\ref{a-4}), it is easy to see that
$$
\lim_{n\rightarrow\infty}\frac{1}{b^2_n} \log \ee \left[\exp\left(\lambda h_n b_n \sum_{k=0}^n \frac{b}{k+1}\beta_{k+1}^n\left(b\alpha_1\right)U_{k+1}\right)\right]=\frac{\lambda^2 \sigma^2}{2}.
$$

In the following we mainly prove (\ref{a-5}).
Let
\beq\label{a-6}
Z_m:=h_n \sum_{j=0}^{m-1}  \frac{b}{j+1} \beta_{j+1}^n\left(b\alpha_1\right) U_{j+1}:= \sum_{j=0}^{m-1}T_{j+1}, \ Z_0=0
\deq
and
\beq\label{a-7}
W_m:=h_n\sum_{j=m+1}^n  \frac{b}{j+1} \beta_{j+1}^n\left(b\alpha_1\right) Y_{j+1}:=\sum_{j=m+1}^n A_{j+1},\ W_n=0.
\deq
From (\ref{a-3}) and (\ref{a-1}), we can get that
\begin{align}\label{b-1}
\frac{\ee \left[\exp\left(\lambda b_n \sum_{k=0}^n T_{k+1}\right)\right]}{\ee \left[\exp\left(\lambda b_n \sum_{k=0}^n A_{k+1}\right)\right]}
&=1+\frac{\ee\Big[\exp\left(\lambda b_n \sum_{k=0}^n T_{k+1}\right)\Big]-\ee \Big[\exp\left(\lambda b_n \sum_{k=0}^n A_{k+1}\right)\Big]}{\ee \Big[\exp\left(\lambda b_n \sum_{k=0}^n A_{k+1}\right)\Big]}\nonumber\\
&=1+\frac{\sum_{m=0}^ {n} Q_m}{\ee \Big[\exp\left(\lambda b_n \sum_{k=0}^n A_{k+1}\right)\Big]},
\end{align}
where
\begin{align}\label{Q}
Q_m:=&\ee \Big[\exp\left(\lambda b_n\left(W_m+Z_m+T_{m+1}\right)\right)\Big]\nonumber\\
&\ \ \  -\ee \Big[\exp\left(\lambda b_n \left(W_m+Z_m+A_{m+1}\right)\right)\Big].
\end{align}

Next we consider $\sum_{m=0}^n Q_m$. From (\ref{a-6}) and (\ref{a-7}), it is easy to see that $Z_m \in \FF_{m}$ and that  $Z_m$ and $W_m$ are independent of each other. Then, it is not difficult to see that
\begin{align*}
\ee &\Big[\exp\big(\lambda b_n \left(W_m+Z_m+T_{m+1}\right)\big)\Big]\\
=&\ee\Bigg[\ee \Big[\exp\big(\lambda b_n \left(W_m+Z_m+T_{m+1}\right)\big)\Big|\FF_{m}\Big]\Bigg]\\
=&\ee \Big[\exp(\lambda b_n W_m)\Big] \ee\Bigg[\exp(\lambda b_n Z_m) \ee \Big[\exp(\lambda b_n T_{m+1})\Big|\FF_{m}\Big]\Bigg]
\end{align*}
and
\begin{align*}
\ee &\Big[\exp\left(\lambda b_n \left(W_m+Z_m+A_{m+1}\right)\right)\Big]\\
=&\ee\Bigg[\ee \Big[\exp\left(\lambda b_n \left(W_m+Z_m+A_{m+1}\right)\right)\Big|\FF_{m}\Big]\Bigg]\\
=&\ee \Big[\exp(\lambda b_n W_m)\Big] \ee\Bigg[\exp(\lambda b_n Z_m) \ee \Big[\exp\left(\lambda b_n A_{m+1}\right)\Big|\FF_{m}\Big]\Bigg].
\end{align*}
Therefore, we can get that
\begin{align}\label{a-8}
Q_m
&=\ee \Big[\exp(\lambda b_n W_m)\Big] \ee\Big[\exp(\lambda b_n Z_m)R_m\Big],
\end{align}
where
\beq\label{a-9}
R_m:=\ee \Big[\exp(\lambda b_n T_{m+1})\Big|\FF_{m}\Big]-\ee \Big[\exp(\lambda b_n A_{m+1})\Big|\FF_{m}\Big].
\deq

Note that $\frac{b_n}{\sqrt{n}}\rightarrow 0$ as $n\rightarrow\infty$ and $b\alpha_1<-1 $. From Remark \ref{rem-1}, we have
\beq\label{h-lem}
h_n:=\left(b^2\sum_{k=0}^n (k+1)^{-2} \left(\beta_{k+1}^n\left(b \alpha_1\right)\right)^2\right)^{-\frac{1}{2}}=O(\sqrt{n}),\ \  n\rightarrow\infty.
\deq
Then from (\ref{a-1}) and Lemma \ref{lem-1}, we know that
\begin{align*}
&\ee \left[\exp\left(\lambda b_n \left|T_{m+1}\right|\right)\Big|\FF_{m}\right]\\
=& \ee \left[\exp\left(\lambda b_n  h_n \frac{b}{m+1} \beta_{m+1}^n(b\alpha_1) \left|U_{m+1}\right|\right)\Bigg|\FF_{m}\right]\\
\le& C \exp\left(\lambda b_n \sqrt{n} (m+1)^{-1-b\alpha_1} n^{b\alpha_1}\right)\\
\le& C \exp\left(\lambda b_n \sqrt{n} n^{-1-b\alpha_1} n^{b\alpha_1}\right)<\infty.
\end{align*}
Therefore, using the Taylor expansion, we can obtain from (\ref{sa-2}) and (\ref{sa-6})  that
\begin{align}\label{a-10}
& \ee \Big[\exp(\lambda b_n T_{m+1})\Big|\FF_{m}\Big]\nonumber\\
=&1+\lambda b_n  h_n \frac{b}{m+1} \beta_{m+1}^n(b\alpha_1)  \ee[ U_{m+1}|\FF_{m}]\nonumber\\
 &+\frac{\lambda^2 b_n^2 h^2_n }{2} \frac{b^2}{(m+1)^2} \left(\beta_{m+1}^n(b\alpha_1)\right)^2\ee[ U^2_{m+1}|\FF_{m}]+O\left(\lambda^3 b_n^3 h^3_n \frac{b^3}{(m+1)^3} \left(\beta_{m+1}^n(b\alpha_1)\right)^3\right) \nonumber\\
=&1+\frac{\lambda^2 b_n^2 h^2_n }{2} \frac{b^2}{(m+1)^2} \left(\beta_{m+1}^n(b\alpha_1)\right)^2\ee[ U^2_{m+1}|\FF_{m}]\nonumber\\
&\ \ \  +O\left(\lambda^3 b_n^3 h^3_n \frac{b^3}{(m+1)^3} \left(\beta_{m+1}^n(b\alpha_1)\right)^3\right)\nonumber\\
 =&1+\frac{\lambda^2 b_n^2 h^2_n }{2} \frac{b^2}{(m+1)^2} \left(\beta_{m+1}^n(b\alpha_1)\right)^2\sigma^2
 +O\left(\lambda^3 b_n^3 h^3_n \frac{b^3}{(m+1)^3} \left(\beta_{m+1}^n(b\alpha_1)\right)^3\right).
\end{align}
Likely, from (\ref{a-3}), (\ref{h-lem}) and Lemma \ref{lem-1}, we know that
\begin{align*}
&\ee \left[\exp\left(\lambda b_n \left| A_{m+1}\right|\right)\Big|\FF_{m}\right]\nonumber\\
=&\ee \left[\exp\left(\lambda b_n h_n \frac{b}{m+1} \beta_{m+1}^n(b\alpha_1)  \left|Y_{m+1}\right|\right)\right]\nonumber\\
\le&2 \exp\left(\frac{\lambda^2 b_n^2 \sigma^2 h_n^2 \frac{b^2}{(m+1)^2} \left(\beta_{m+1}^n(b\alpha_1)\right)^2 }{2}\right)\nonumber\\
\le &2 \exp\left(C  b_n^2 n (m+1)^{-2-2 b\alpha_1} n^{2 b\alpha_1} \right)\nonumber\\
\le & 2 \exp\left(C  b_n^2 n n^{-2-2 b\alpha_1} n^{2 b\alpha_1} \right) <\infty.
\end{align*}
Observing that $Y_i\sim N(0,\sigma^2)$, we can obtain from (\ref{a-3}) and the Taylor expansion that
\begin{align}\label{a-11}
&\ \ \ \ee \Big[\exp(\lambda b_n A_{m+1})\Big|\FF_{m}\Big]\nonumber\\
=&1+\lambda b_n  h_n \frac{b}{m+1} \beta_{m+1}^n(b\alpha_1)  \ee[ Y_{m+1}]\nonumber\\
&+\frac{\lambda^2 b_n^2 h^2_n }{2} \frac{b^2}{(m+1)^2} \left(\beta_{m+1}^n(b\alpha_1)\right)^2\ee[ Y^2_{m+1}]
 +O\left(\lambda^3 b_n^3 h^3_n \frac{b^3}{(m+1)^3} \left(\beta_{m+1}^n(b\alpha_1)\right)^3\right) \nonumber\\
=&1+\frac{\lambda^2 b_n^2 h^2_n }{2} \frac{b^2}{(m+1)^2} \left(\beta_{m+1}^n(b\alpha_1)\right)^2\ee[ Y^2_{m+1}]
 +O\left(\lambda^3 b_n^3 h^3_n \frac{b^3}{(m+1)^3} \left(\beta_{m+1}^n(b\alpha_1)\right)^3\right)\nonumber\\
=&1+\frac{\lambda^2 b_n^2 h^2_n }{2} \frac{b^2}{(m+1)^2} \left(\beta_{m+1}^n(b\alpha_1)\right)^2\sigma^2
 +O\left(\lambda^3 b_n^3 h^3_n \frac{b^3}{(m+1)^3} \left(\beta_{m+1}^n(b\alpha_1)\right)^3\right).
\end{align}
From (\ref{a-9}), (\ref{a-10}), and (\ref{a-11}), we can get that
\beq\label{a-12}
R_m= O\left(\lambda^3 b_n^3 h^3_n \frac{b^3}{(m+1)^3} \left(\beta_{m+1}^n(b\alpha_1)\right)^3\right).
\deq

Finally, we have to bound $ \ee\Big[\exp\left(\lambda b_n W_m \right)\Big] \ee\Big[\exp\left(\lambda b_nZ_m \right)\Big]$. On the one hand, by Gaussian integration, we can determine from (\ref{a-7}) that
\begin{align}\label{a-13}
\ee\big[\exp\left(\lambda b_n  W_m\right)\big]&=\ee\Bigg[\exp\left(\lambda b_n h_n  \sum_{j=m+1}^n  \frac{b}{j+1} \beta_{j+1}^n(b\alpha_1) Y_{j+1}\right)\Bigg]\nonumber\\
&=\exp\left(\frac{\lambda^2 b_n^2 h_n^2  \sigma^2}{2}\sum_{j=m+1}^n \frac{b^2}{(j+1)^2} \left(\beta_{j+1}^n(b\alpha_1)\right)^2 \right).
\end{align}
On the other hand, from  (\ref{a-6}), we can get that
\begin{align}\label{a-17}
\ee\Big[\exp\left(\lambda b_n  Z_m\right)\Big]&=\ee\Bigg[\exp\left(\lambda b_n  \sum_{j=0}^{m-1} T_{j+1}\right)\Bigg]\nonumber\\
&=\ee \left[\ee\left[\exp\left(\lambda b_n  \sum_{j=0}^{m-1} T_{j+1}\right)\Bigg|\FF_{m-1}\right]\right]\nonumber\\
&=\ee \left[\prod_{j=0}^{m-2}\exp\left(\lambda b_n  T_{j+1}\right) \ee \left[\exp\left(\lambda b_n T_m\right)\Big|\FF_{m-1}\right]\right].
\end{align}
Noting that $b\alpha_1<-1$, and applying (\ref{h-lem}), (\ref{a-10})  and Lemma \ref{lem-1} , we easily conclude that
\begin{align*}
&\ee \Big[\exp(\lambda b_n T_{m+1})\Big|\FF_{m}\Big]\\
=& 1+\frac{\lambda^2 b_n^2 h^2_n }{2} \frac{b^2}{(m+1)^2} \left(\beta_{m+1}^n(b\alpha_1)\right)^2\sigma^2
 +O\left(\lambda^3 b_n^3 h^3_n \frac{b^3}{(m+1)^3} \left(\beta_{m+1}^n(b\alpha_1)\right)^3\right)\\
\le & 1+\frac{\lambda^2 b_n^2 \sigma^2}{2} n (m+1)^{-2} (m+1)^{-2b \alpha_1} n^{2b \alpha_1}+O\left(\lambda^3 b_n^3 n^{\frac{3}{2}}(m+1)^{-3} (m+1)^{-3b \alpha_1} n^{3b \alpha_1}  \right)\\
\le& 1+\frac{\lambda^2 b_n^2 \sigma^2}{2} n n^{-2-2b \alpha_1} n^{2b \alpha_1}+O\left(\lambda^3 b_n^3 n^{\frac{3}{2}}n^{-3-3b \alpha_1} n^{3b \alpha_1}  \right)\\
\le& 1+\frac{\lambda^2 b_n^2 \sigma^2}{2n} +O\left(\lambda^3 b_n^3 n^{-\frac{3}{2}}\right).
\end{align*}
yielding by iteration we know that
\begin{align}\label{a-14}
\ee\Big[\exp\left(\lambda b_n  Z_m\right)\Big]&\le \left(1+\frac{\lambda^2 b_n^2 \sigma^2}{2n} +O\left(\lambda^3 b_n^3 n^{-\frac{3}{2}}\right)\right)^{m}\nonumber\\
&\le C\exp\left(\frac{\lambda^2 b_n^2 \sigma^2 m}{2n} \right).
\end{align}
Combining Lemma \ref{lem-1}, (\ref{a-6}), (\ref{a-7}), (\ref{h-lem}), (\ref{a-13}), and (\ref{a-14}), we can get that
\begin{align}\label{a-15}
&\ee\Big[\exp\left(\lambda b_nW_m \right)\Big] \ee\Big[\exp\left(\lambda b_n Z_m \right)\Big]\nonumber\\
=&  C\exp\left(\frac{\lambda^2 b_n^2 \sigma^2 m}{2n} \right) \exp\left(\frac{\lambda^2 b_n^2 h_n^2  \sigma^2}{2}\sum_{j=m+1}^n \frac{b^2}{(j+1)^2} \left(\beta_{j+1}^n(b\alpha_1)\right)^2 \right) \nonumber\\
=& C \exp\left(\frac{\lambda^2 b_n^2 \sigma^2}{2} \left(\frac{m}{n}+h_n^2\sum_{j=m+1}^n \frac{b^2}{(j+1)^2} \left(\beta_{j+1}^n(b\alpha_1)\right)^2\right)\right)\nonumber\\
=&C \exp\left(\frac{\lambda^2 b_n^2 \sigma^2}{2} \left(\frac{m}{n}+1-\frac{\sum_{j=0}^m \frac{1}{(j+1)^2} \left(\beta_{j+1}^n(b\alpha_1)\right)^2}{\sum_{k=0}^n \frac{1}{(k+1)^2} \left(\beta_{k+1}^n(b\alpha_1)\right)^2}\right)\right)\nonumber\\
\le& C \exp\left(\frac{\lambda^2 b_n^2 \sigma^2}{2}\right).
\end{align}
Combining (\ref{a-8}), (\ref{h-lem}), (\ref{a-12}), (\ref{a-15}) and Lemma \ref{lem-1} ,  and using the fact that $b\alpha_1<-1$, we obtain that
\begin{align}\label{a-16}
\sum_{m=0}^n Q_m&\le C \exp\left(\frac{\lambda^2 b_n^2 \sigma^2}{2} \right) \sum_{m=0}^n C\left(\lambda^3 b_n^3 h^3_n \frac{b^3}{(m+1)^3} \left(\beta_{m+1}^n(b\alpha_1)\right)^3\right) \nonumber\\
&\le C \exp\left(\frac{\lambda^2 b_n^2 \sigma^2}{2}\right)\sum_{m=0}^n \lambda^3 b_n^3 n^{\frac{3}{2}}(m+1)^{-3} (m+1)^{-3b \alpha_1} n^{3b \alpha_1}  \nonumber\\
&\le C\lambda^3 b_n^3 n^{-\frac{1}{2}} \exp\left(\frac{\lambda^2 b_n^2 \sigma^2}{2}\right) .
\end{align}
Then from (\ref{a-4}), (\ref{b-1}) and (\ref{a-16}) we know that
\begin{align}
\frac{\ee [\exp\left(\lambda  b_n  \sum_{k=0}^n T_{k+1}\right)]}{\ee [\exp\left(\lambda b_n \sum_{k=0}^n A_{k+1}\right)]}&\le 1+\frac{ C\lambda^3 b_n^3 n^{-\frac{1}{2}}\exp(\frac{\lambda^2 b_n^2 \sigma^2}{2} ) }{\exp\left(\frac{\lambda^2 b_n^2 \sigma^2}{2} \right)}\nonumber\\
&\le \exp\left(C(\lambda^3 b_n^3 n^{-\frac{1}{2}})\right).
\end{align}
Note that $\frac{b_n}{\sqrt{n}}\rightarrow 0$ as $n\rightarrow\infty$, it is not difficult to see that
\begin{align}\label{t-4}
 \lim_{n\rightarrow\infty} \frac{1}{b^2_n} \log \frac{\ee [\exp\left(\lambda  b_n  \sum_{k=0}^n T_{k+1}\right)]}{\ee [\exp\left(\lambda b_n \sum_{k=0}^n A_{k+1}\right)]}&\le \lim_{n\rightarrow\infty}  \frac{1}{b^2_n} C b_n^3 n^{-\frac{1}{2}}\nonumber\\
&=\lim_{n\rightarrow\infty} C\frac{ b_n}{\sqrt{n}}=0.
\end{align}

Next, we will prove that
$$
\lim_{n\rightarrow\infty} \frac{1}{b^2_n} \log \frac{\ee [\exp\left(\lambda b_n \sum_{k=0}^n A_{k+1}\right)]}{\ee [\exp\left(\lambda b_n  \sum_{k=0}^n T_{k+1}\right)]}\le 0.
$$
First, we need the inequality
\beq\label{e}
e^{x}\left(1+\frac{x}{n}\right)^{-x}\le\left(1+\frac{x}{n}\right)^n\le e^x
\deq
hold for all real numbers $x$.
We will focus on proving the first inequality, that is, we need to show that
$$
e^x\le \left(1+\frac{x}{n}\right)^{x+n}.
$$
By substituting $t=\frac{x}{n}$ (i.e. $x=nt$), the inequality becomes
$$
e^{nt}\le \left(1+t\right)^{nt+n}.
$$
Taking the logarithm of both sides, we know that
$$
t\le \left(1+t\right)\ln (1+t).
$$
It is known that for $t>-1$, $\log (1+t)\ge \frac{t}{1+t}$ holds, so we can conclude that (\ref{e}) holds.
Since $\frac{b_n}{\sqrt{n}}\rightarrow 0$, from Lemma \ref{lem-1} and (\ref{h-lem}), it follows that for any sufficiently small $\varepsilon>0$,
$$
\left|\lambda b_n h_n \frac{b}{k+1} \beta_{k+1}^n(b\alpha_1)\right|\le \varepsilon
$$
holds for every $0\le k\le n$.
By choosing $\delta\in (0,1)$ and a constant $C_0$ such that $0<C_0<1$ (which may vary depending on the context),
from Lemma \ref{lem-1}, (\ref{h-lem}), (\ref{a-10}), (\ref{a-17}) and (\ref{e}),   we can conclude that
\begin{align*}
&\ee \left[\exp\left(\lambda b_n \sum_{k=0}^n T_{k+1}\right)\right]\\
=&\prod_{k=0}^n\left(1+\frac{\lambda^2 b_n^2 h^2_n }{2} \frac{b^2}{(k+1)^2} \left(\beta_{k+1}^n(b\alpha_1)\right)^2\sigma^2
 +O\left(\lambda^3 b_n^3 h^3_n \frac{b^3}{(k+1)^3} \left(\beta_{k+1}^n(b\alpha_1)\right)^3\right)\right)\\
 \ge & C_0 \prod_{k=0}^n\left(1+\left(1-\varepsilon\right)\frac{\lambda^2 b_n^2 h^2_n }{2} \frac{b^2}{(k+1)^2} \left(\beta_{k+1}^n(b\alpha_1)\right)^2\sigma^2\right)\\
=& C_0\prod_{k=0}^n\left(1+\left(1-\varepsilon\right)\frac{\lambda^2 b_n^2 \sigma^2 }{2}\frac{(k+1)^{-2} \left(\beta_{k+1}^n(b\alpha_1)\right)^2}{\sum_{m=0}^{n} \left(\frac{\beta_{m+1}^n (b\alpha_1)}{m+1}\right)^2}
 \right)\\
 \ge  &C_0\prod_{k=[\delta n]}^n\left(1+(1-\varepsilon)\frac{\lambda^2 b_n^2 \sigma^2 }{2} \frac{(k+1)^{-2} \left(\beta_{k+1}^n(b\alpha_1)\right)^2}{\sum_{m=0}^{n} \left(\frac{\beta_{m+1}^n (b\alpha_1)}{m+1}\right)^2}
 \right)\\
 \ge &C_0 \prod_{k=[\delta n]}^n\left(1+C_0\frac{\lambda^2 b_n^2 \sigma^2 }{2}  n(\delta n)^{-2-2b\alpha_1} n^{2b\alpha_1}\right)\\
 \ge &C_0\left(1+\frac{\lambda^2 b_n^2 \sigma^2 }{2} \frac{C_0}{n} \right)^{n-\delta n}=\left(1+\frac{\lambda^2 b_n^2 \sigma^2 }{2} \frac{C_0}{n} \right)^{  \frac{n}{C_0}C_0(1-\delta)}\\
 \ge &C_0 \left(\exp\left(\frac{\lambda^2 b_n^2 \sigma^2 }{2}\right)\left(1+\frac{\lambda^2 b_n^2 \sigma^2 }{2} \frac{C_0}{n} \right)^{-\frac{\lambda^2 b_n^2 \sigma^2 }{2}}\right)^{(1-\delta)C_0}\ge C_0 \exp\left(\frac{\lambda^2 b_n^2 \sigma^2 }{2}\right).
\end{align*}
Therefore,  from (\ref{a-4}) we obtain that
\begin{align}\label{t-5}
\lim_{n\rightarrow\infty} \frac{1}{b^2_n} \log \frac{\ee [\exp\left(\lambda b_n \sum_{k=0}^n A_{k+1}\right)]}{\ee [\exp\left(\lambda b_n  \sum_{k=0}^n T_{k+1}\right)]}&\le \lim_{n\rightarrow\infty} \frac{1}{b^2_n} \log \frac{\exp\left(\frac{\lambda^2 b_n^2 \sigma^2}{2}\right)}{C_0 \exp\left(\frac{\lambda^2 b_n^2 \sigma^2}{2}\right)}\nonumber\\
&=\lim_{n\rightarrow\infty} \frac{C}{b_n^2} =0.
\end{align}
Combining (\ref{t-4}) and (\ref{t-5}) yields (\ref{a-5}). Consequently, it follows that (\ref{t-2}) holds, completing the proof.
\end{proof}

\begin{proof}[{\bf Proof of Theorem \ref{thm-sa1}}]
Note that $g(x^*)=0$. From (\ref{sa-0}), we have
\begin{align}\label{p-1-1}
X_{n+1}-x^*&=X_n-x^*+\frac{b}{n+1} \left(g(X_n)+U_{n+1}\right)\nonumber\\
&=X_n-x^*+\frac{b}{n+1}\left[(X_n-x^*)g'(x^*)+\frac{1}{2}g^{''}(\eta_n)(X_n-x^*)^2+U_{n+1}\right]\nonumber\\
&=(X_n-x^*)\left[1+\frac{b}{n+1} g'(x^*)\right]+\frac{b}{2(n+1)}g^{''}(\eta_n)(X_n-x^*)^2+\frac{b}{n+1}U_{n+1},
\end{align}
where $\eta_n=x^*+\theta(X_n-x^*), \theta\in(0,1)$. Let $\alpha_1:=g'(x^*)<0$ and $\beta_{k}^n(b \alpha_1):=\prod_{j=k}^n \left(1+\frac{b \alpha_1}{j+1}\right)$. Iteration of (\ref{p-1-1}) yields
\begin{align}\label{p-1-2}
X_{n+1}-x^*&=\prod_{k=0}^n \left(1+\frac{b \alpha_1}{k+1}\right)\left(X_0-x^*\right)+b\sum_{k=0}^n \frac{1}{k+1}\prod_{i=k+1}^n \left(1+\frac{b \alpha_1}{i+1}\right) U_{k+1}\nonumber\\
&\ \ \ \ +\frac{b}{2}\sum_{k=0}^n \frac{1}{k+1}\prod_{i=k+1}^n \left(1+\frac{b \alpha_1}{i+1}\right)g^{''}(\eta_k)\left(X_k-x^*\right)^2\nonumber\\
&=\beta_{0}^n(b\alpha_1)(X_0-x^*)+\frac{b}{2}\sum_{k=0}^n \frac{1}{k+1}\beta_{k+1}^n(b\alpha_1)g^{''}(\eta_k)(X_k-x^*)^2\nonumber\\
&\ \ \ \ +b\sum_{k=0}^n \frac{1}{k+1}\beta_{k+1}^n(b\alpha_1)U_{k+1}\nonumber\\
&:= I_{1,n}+I_{2,n}+I_{3,n}.
\end{align}
Let
\beq\label{h}
h_n:=\left(b^2\sum_{k=0}^n (k+1)^{-2} \left(\beta_{k+1}^n(b \alpha_1)\right)^2\right)^{-\frac{1}{2}}.
\deq
From Remark \ref{rem-1} we know that $h_n=O\left(\sqrt{n}\right), \ n\rightarrow\infty$.

Note that $b\alpha_1<-1$,  from Lemma \ref{lem-1}, we can get that
$$
h_n|I_{1,n}|=h_n \beta_{0}^n(b\alpha_1)|X_0-x^*|\le C \sqrt{n} n^{b\alpha_1}\rightarrow0.
$$
Then it follows that, for any $r>0$,
\beq\label{b-2}
\lim_{n\rightarrow\infty}\frac{1}{b^2_n} \log \pp (h_n|I_{1,n}|>r b_n)=-\infty.
\deq
Now we take
$$
p_{kn}:=d_n (k+1)^{-b \alpha_1-2} n^{\frac{1}{2}+b \alpha_1}b_n^{1+\gamma}, \ \ \  \gamma>0, \ \ \ 0\le k\le n,
$$
where $\{d_n\}$ is a bounded real sequence, to guarantee that $\sum_{k=0}^n p_{kn}=1$.
From (\ref{sa-1}), for any $r>0$,
\begin{align*}
\pp (h_n|I_{2,n}|>r b_n)&=\pp\left(h_n \frac{b}{2}\sum_{k=0}^n \frac{1}{k+1}\beta_{k+1}^n(b\alpha_1)g^{''}(\eta_k)(X_k-x^*)^2>r b_n\right)\\
&\le \pp\left(h_n \frac{b}{2}\sum_{k=0}^n \frac{1}{k+1}\beta_{k+1}^n(b\alpha_1)K_a(X_k-x^*)^2>r b_n\right)\\
&\le \sum_{k=0}^n \pp\left(h_n \frac{b}{2} \frac{1}{k+1}\beta_{k+1}^n(b\alpha_1)K_a(X_k-x^*)^2>r b_n p_{kn}\right)\\
&=\sum_{k=0}^n \pp\left(\left(X_k-x^*\right)^2>\frac{C_1(k+1) b_n p_{kn}}{h_n \beta_{k+1}^n(b\alpha_1)}\right),
\end{align*}
where $C_1:= \frac{2r}{b K_a}$. On the one hand, by substituting the definition of $p_{kn}$, from Lemma \ref{lem-1} and considering that $b_n=O(n^{\frac{1}{2(1+\gamma)}})$ for some $\gamma>0$, it is easy to check that, for any fixed $k$,
$$
\lim_{n\rightarrow\infty}  \frac{C_1(k+1) b_n p_{kn}}{h_n \beta_{k+1}^n(b\alpha_1)}=\lim_{n\rightarrow\infty}  \frac{b_n^{2+\gamma}}{k+1}=\infty.
$$
On the other hand, from Theorem \ref{lem-5}, there exists a positive constant $N_1$ large enough such that
\begin{align*}
\sum_{k=N_1}^n \pp\left(\left(X_k-x^*\right)^2>\frac{C_1(k+1) b_n p_{kn}}{h_n \beta_{k+1}^n(b\alpha_1)}\right)&\le C \sum_{k=N_1}^n \exp\left(-C k\frac{C_1(k+1) b_n p_{kn}}{h_n \beta_{k+1}^n(b\alpha_1)}\right) \\
&\le C n \exp\left(-C b_n^{2+\gamma}\right).
\end{align*}
In summary, we can conclude that for any $r>0$,
\beq\label{b-3}
\lim_{n\rightarrow\infty}\frac{1}{b^2_n} \log \pp (h_n|I_{2,n}|>r b_n)=-\infty.
\deq
Then it is necessary to prove that, for any $r > 0$,
\begin{align}\label{t-1}
&\lim_{n\rightarrow\infty} \frac{1}{b^2_n} \log \pp (h_n|I_{3,n}|>r b_n)\nonumber\\
=&\lim_{n\rightarrow\infty} \frac{1}{b^2_n} \log \pp \Big(h_n\Big|\sum_{k=0}^n \frac{b}{k+1}\beta_{k+1}^n(b\alpha_1)U_{k+1}\Big|>r b_n\Big)=-\frac{r^2}{2 \sigma^2}.
\end{align}
From Theorem \ref{lem-6} we know that (\ref{t-1}) holds. By integrating  (\ref{p-1-2}), (\ref{b-2}), (\ref{b-3}) and (\ref{t-1}),  we can complete the proof.
\end{proof}

\end{document}